\newtheorem{theorem}{Theorem}[section]
\newtheorem{lemma}[theorem]{Lemma}
\newtheorem{corollary}[theorem]{Corollary}
\theoremstyle{definition}
\newtheorem{proposition}[theorem]{Proposition}
\theoremstyle{remark}
\newtheorem{remark}[theorem]{Remark}
\numberwithin{equation}{section}
\begin{document}

\title{$\pi_1$-injective proper maps between non-compact surfaces}

\author[Sumanta Das]{Sumanta Das}
\address{Department of Mathematics, Indian Institute of Science, Bangalore 560012, India}
\curraddr{Department of Mathematics, Indian Institute of Technology Bombay, Mumbai 400076, India}
\email{sumantadas@iisc.ac.in}
\thanks{Supported by the National Board for Higher Mathematics PhD Scholarship and the IIT Bombay Institute Postdoctoral Fellowship.}
\keywords{Topological rigidity, infinite-type surfaces, finite-sheeted coverings}
\subjclass[2020]{Primary 57K20, Secondary 55S37.}

\date{}

\dedicatory{}

\begin{abstract}
We classify all $\pi_1$-injective proper maps between non-compact surfaces up to  proper homotopy.
\end{abstract}

\maketitle
\section{Introduction}
A \emph{surface} (resp. \emph{bordered surface}) is a connected, orientable $2$-dimensional manifold with an empty (resp. a non-empty) boundary. In 1927, Nielsen proved that any $\pi_1$-injective map between two compact surfaces is homotopic to a covering map. In dimension three, achieving an analog classification of $\pi_1$-injective proper maps is also possible, subject to various constraints. For example, Waldhausen \cite[Theorem 6.1]{MR224099} proved that if $f\colon N\to M$ is $\pi_1$-injective map between two connected, closed, orientable, irreducible $3$-manifolds, where $\pi_1(N)\neq 0$ and $M$ is Haken, then $f$ homotopic to a covering map. Brown and Tucker \cite[Theorem 4.2]{MR334225} showed that if $f\colon M\to N$ is a $\pi_1$-injective proper map between two connected, non-compact, orientable, irreducible, end-irreducible, boundaryless $3$-manifolds such that $\pi_1(M)$ is not isomorphic to the fundamental group of any compact surface, then $f$ is properly homotopic to a finite-sheeted covering map.

The main theorems of this article are the following.

\begin{theorem}
    \textup{Let $\Sigma',\Sigma$ be two non-compact oriented surfaces such that $\Sigma'$ is neither the plane nor the punctured plane. Suppose $f\colon \Sigma'\to \Sigma$ is a $\pi_1$-injective proper map. Then $f$ is properly homotopic to a finite-sheeted covering map. In particular, $\deg(f)\neq 0$. }\label{all}
\end{theorem}
\begin{theorem}\label{puncturedplane}
    \textup{Suppose $f$ is a $\pi_1$-injective proper map from $\mathbb{C}^* \coloneqq \mathbb{C} \setminus \{0\}$ to a non-compact oriented surface $\Sigma$. Let $n\coloneqq \deg(f)$. Then, we have the following.
    \begin{enumerate}
        \item\label{puncturedplane1} If $n=0$, then there exists a $\pi_1$-injective, proper embedding $\iota\colon \mathbb  S^1\times [0,\infty)\hookrightarrow \Sigma$, along with a non-zero integer $d$, such that after a proper homotopy, $f$ can be described by the proper map $\mathbb  S^1\times \mathbb  R\ni (z,t)\longmapsto \iota\left(z^d,|t|\right)\in \Sigma$. In particular, given any compact subset $K$ of $\Sigma$, there exists a proper map $g$ properly homotopic to $f$ such that $\operatorname{im}(g)\subseteq \Sigma\setminus K$.
        \item\label{puncturedplane2} If $n\neq 0$, then $\Sigma=\mathbb  C^*$ and $f$ is properly homotopic to the covering $\mathbb  C^*\ni z\longmapsto z^n\in \mathbb  C^*$ if $n>0$, and to the covering $\mathbb  C^*\ni z\longmapsto \overline z^{-n}\in \mathbb  C^*$ if $n<0$.
    \end{enumerate}}
\end{theorem}
\begin{theorem}\label{plane}
    \textup{Suppose $f$ is a proper map from $\mathbb  C$ to a non-compact oriented surface $\Sigma$. Let $n\coloneqq \deg(f)$.  Then, we have the following.\begin{enumerate}
        \item\label{plane1} If $n=0$, then for every compact subset $K$ of $\Sigma$, there exists a proper map $g$ properly homotopic to $f$ such that $\operatorname{im}(g)\subseteq \Sigma\setminus K$.
        \item\label{plane2} If $n\neq 0$, then $\Sigma=\mathbb  C$, and $f$ is properly homotopic to the branched covering $\mathbb  C\ni z\longmapsto z^n\in \mathbb  C$ if $n>0$, and to the branched covering $\mathbb  C\ni z\longmapsto \overline z^{-n}\in \mathbb  C$ if $n<0$.
        \end{enumerate}} 
\end{theorem}

\Cref{all} is also stated in \cite[Proposition 2.1(b)]{MR1308972}, with a reference to an unpublished preprint \cite{neverpublished}, where the proof is described as being in the spirit of a result from \cite[Theorem 4.2]{MR334225}. However, we would like to note that our proof of \Cref{all} follows an entirely different approach and does not depend on the methods or ideas presented in these works.

\section{Preliminaries}
\subsection{Conventions}
For integers \( g \geq 0 \), \( b \geq 0 \), and \( p \geq 0 \), denote by \( S_{g,b} \) the connected, oriented \( 2 \)-manifold of genus \( g \) with \( b \) boundary components. Let \( S_{g,b,p} \) denote the \( 2 \)-manifold obtained by removing \( p \) points from \( \operatorname{int}(S_{g,b}) \), where \( \operatorname{int}(M) \) denotes the interior of a manifold \( M \). For convenience, we will occasionally refer to \( S_{0,1} \), \( S_{0,2} \), \( S_{0,3} \), \(S_{1,1}\), \( S_{1,2} \), and \( S_{0,1,1} \) as a disk, an annulus, a pair of pants, a one-holed torus, a two-holed torus, and a punctured disk, respectively.

Let \( \Sigma \) be a non-compact surface, and let \( \mathrm{S} \) be a subsurface of \( \Sigma \). We say that \( \mathrm{S} \) is \emph{essential} if the inclusion-induced map \( \pi_1(\mathrm{S}) \to \pi_1(\Sigma) \) is injective, and that it is \emph{properly embedded} if the inclusion map \( \mathrm{S} \hookrightarrow \Sigma \) is proper.

\subsection{The space of ends}Let \( M \) be a non-compact connected topological manifold, and let \( a, b \colon [0,\infty) \to M \) be two proper maps. We say that \( a \) and \( b \) are \emph{equivalent} if, for every compact subset \( C \subseteq M \), there exists \( t_C \geq 0 \) such that \( a(t) \) and \( b(t) \) lie in the same component of \( M \setminus C \) for all \( t \geq t_C \). The equivalence class of a proper map \( a \colon [0,\infty) \to M \) is called an \emph{end} of \( M \), and is denoted by \( [a] \).

The set of all ends of \( M \), denoted \( \operatorname{Ends}(M) \), carries a natural topology whose basis consists of open sets of the form  
\[
\underline{A} \coloneqq \{ e \in \operatorname{Ends}(M) \mid \exists a \in e \text{ such that } a([t,\infty)) \subseteq A \text{ for some } t \geq 0 \},
\]
where \( A \) is a component of the complement of some compact subset of \( M \), and the closure of \( A \) in \( M \) is non-compact. Equipped with this topology, \( \operatorname{Ends}(M) \) is homeomorphic to a closed subset of the Cantor set.

If \( N \) is another non-compact connected topological manifold and \( f \colon M \to N \) is a proper map, then \( f \) induces a continuous map $\operatorname{Ends}(f) \colon \operatorname{Ends}(M) \to \operatorname{Ends}(N)$,
given by \( [a] \longmapsto [fa] \). Thus, \( \operatorname{Ends} \) defines a functor in the sense that the induced map of the identity is the identity, and the induced map of a composition of two proper maps is the composition of their induced maps. Moreover, if two proper maps \( f_0, f_1 \colon M \to N \) are properly homotopic, then \( \operatorname{Ends}(f_0) = \operatorname{Ends}(f_1) \) \cite[Proposition 3.3.12]{MR3598162}.

Let \( M \) be a surface, and let \( e \) be an end of \( M \). We say that \( e \) is \emph{planar} if there exists a basic open neighborhood \( \underline{A} \) of \( e \) such that \( A \) embeds in the plane. The end \( e \) is said to be \emph{isolated} if it is an isolated point in the space of ends \( \operatorname{Ends}(M) \). Thus, \( e \) is an isolated planar end of \( M \) if and only if there exists a properly embedded punctured disk \( D_* \subseteq M \) such that \( \{e\} = \underline{\operatorname{int}(D_*)} \). Observe that if \( M \) is not homeomorphic to the plane, then every properly embedded punctured disk in \( M \) is necessarily essential.

\subsection{The degree of a proper map}Consider a pair $A\subseteq X$ of topological spaces, and let $n$ be a non-negative integer. Recall that the $n$-th singular cohomology with compact support $H^n_c(X, A;\mathbb  Z)$ is equal to the direct limit $\varinjlim H^n\big(X, A\cup(X\setminus K);\mathbb  Z\big)$, where $K$ is a compact subset of $X$ and the various maps to define this direct system  are induced by inclusions. Hence, for a compact subset $K$ of $X$, the definition of direct limit yields an \emph{obvious map} $H^n\big(X, A\cup(X\setminus K);\mathbb  Z\big)\to H^n_c(X, A;\mathbb  Z)$. 

Let $M_1$ and $M_2$ be two connected, oriented, topological $n$-manifolds. Denote the preferred generator of the $n$-th singular cohomology with compact support $H^n_c(M_j,\partial M_j;\mathbb  Z)\cong \mathbb  Z$  compatible with the orientation of $M_j$ by $[M_j]$ for each $j=1,2$. If $f\colon (M_1, \partial M_1)\to (M_2, \partial M_2)$ is a proper map, then the \emph{degree} of $f$ is the unique integer $\deg(f)$ that satisfies $H^n_c(f)\big([M_2]\big)=\deg(f)\cdot [M_1]$. Note that the degree is proper homotopy invariant and multiplicative. We quote a few important theorems. By a disk \( D \) in a smooth \( n \)-manifold \( M \), we mean the image of \( \{ x \in \mathbb{R}^n \colon |x| \leq 1 \} \) under a smooth embedding \( \{ x \in \mathbb{R}^n \colon |x| \leq 2 \} \hookrightarrow M \).

\begin{theorem}
\textup{\cite[Lemma 2.1(b)]{MR192475} Let $f\colon M\to N$ be a proper map between two connected, oriented, smooth $n$-manifolds such that $f(\partial M)\subseteq\partial N$. Suppose there exists a disk $D$ in $\operatorname{int}(N)$ with the property that $f^{-1}(D)$ is the pairwise disjoint union of disks $D_1,..., D_k$ in $\operatorname{int}(M)$ ($k$ must be a non-negative integer as $f$ is proper) such that $f$ maps each $D_i$ homeomorphically onto $D$. For each $i$, let $\varepsilon_i$ be $+1$ or $-1$ according as the homeomorphism $f\vert D_i\to D$ is orientation-preserving or orientation-reversing. Then $\deg(f)=\sum_{i=1}^k\varepsilon_i$. In particular, if $f^{-1}(D)=\varnothing$, then $\deg(f)=0$.}
\label{degreeonemapchecking}\end{theorem}

\begin{theorem}\textup{\cite[Theorems 3.1 and 4.1]{MR192475} Let  $f\colon M\to N$ be a proper map between two connected, oriented, smooth $n$-manifolds such that $f(\partial M)\subseteq \partial N$. Let $\ell\coloneqq |\deg(f)|$. Then there is a proper map $g\colon M\to N$ with $g(\partial M)\subseteq \partial N$ and a homotopy $\mathcal H\colon M\times [0,1]\to N$ from $f$ to $g$ with the following properties:} 
\begin{itemize}
    \item \textup{There exists a compact subset $K\subseteq \operatorname{int}(M)$ such that $\mathcal H(x,t)=f(x)$ for all $(x,t)\in (M\setminus K)\times [0,1]$. In particular,  $\mathcal H$ is a proper homotopy relative to $\partial M$. }
    \item\textup{There exists a disk $D\subseteq \operatorname{int}(N)$ such that $g^{-1}(D)$ is the pairwise disjoint union of disks $D_1,..., D_\ell$ in $\operatorname{int}(M)$ and $g$ maps each $D_i$ homeomorphically onto $D$.}
\end{itemize}
\textup{Therefore, if the degree of $g$, which is the same as the degree of $f$, is positive (resp. negative), then  $g\vert D_i\to D$ is an orientation-preserving (resp. orientation-reserving) homeomorphism for each $i=1,...,\ell$. In particular, if $\ell=0$, then $f$ can be properly homotoped to a non-surjective map,  relative to the complement of a compact subset of $\operatorname{int}(M)$. }
\label{reverseprocessindegreefinding}
\end{theorem}

\begin{theorem}
     \textup{\cite[Corollary 3.4]{MR192475} Let $f\colon M\to N$ be a proper map between two connected, oriented, smooth $n$-manifolds such that $f(\partial M)\subseteq \partial N$. If $\deg(f)\neq 0$, then the index $\left[\pi_1(N):\operatorname{im}\pi_1(f)\right]$ divides $\deg(f)$. In particular, if $\deg(f)=\pm 1$, then $\pi_1(f)$ is surjective.}\label{degreeonemapsarepi1surjective}
 \end{theorem}
The proof of \Cref{degreeonemapsarepi1surjective} uses the following lemma. 
\begin{lemma}\label{techlemma}
    \textup{\cite[Proof of Theorem 3.1]{MR192475} Let $f\colon M\to N$ be a proper map between connected, oriented, smooth $n$-manifolds such that $f(\partial M)\subseteq \partial N$, and let $p\colon \widetilde N\to N$ be the covering corresponding to the subgroup $\operatorname{im}\pi_1(f)$ of $\pi_1(N)$. If $\deg(f)\neq 0$, then $p$ is a proper map. }
\end{lemma}
Since a proper map between manifolds is a closed map \cite{MR254818}, the following theorem follows from \Cref{degreeonemapchecking}.
\begin{theorem} \label{non-surjectivepropermaphasdegreezero}
    \textup{Let $f\colon M\to N$ be a proper map between two connected, oriented, smooth $n$-manifolds such that $f(\partial M)\subseteq \partial N$. If $\deg(f)\neq 0$, then $f$ is surjective.}
\end{theorem}

\section{Proofs of the main theorems}
We begin with the proof of  \Cref{all}.
\begin{proof}[Proof of \Cref{all}]
    Let $p\colon \widetilde \Sigma\to \Sigma$ be the covering corresponding to the subgroup $\operatorname{im}\pi_1(f)$ of $\pi_1(\Sigma)$, and let $\widetilde f\colon \Sigma'\to \widetilde \Sigma$ be a lift of $f$ with respect to $p$, i.e., $p\circ \widetilde f=f$. Thus, $\operatorname{im}\pi_1(p)=\operatorname{im}\pi_1(f)$, and hence $\pi_1(\widetilde f)\colon \pi_1(\Sigma')\to \pi_1(\widetilde \Sigma)$ is an isomorphism because a covering map induces injection between fundamental groups. Note that if a non-compact surface has an infinite cyclic (resp. trivial) fundamental group, then it is homeomorphic to the punctured plane (resp. plane). Also, the properness of $f$ implies the properness of $\widetilde f$. Since non-compact surfaces are $K(\pi,1)$ CW-complexes, by Whitehead theorem $\widetilde f$ is a homotopy equivalence. Thus, by \cite[Theorem 1]{MR4843735}, $\widetilde f$ is properly homotopic to a homeomorphism, which implies $\deg (\widetilde f)=\pm 1$.
    
    We claim that $p$ is a $d$-sheeted covering for some positive integer $d$. Notice that it is sufficient to show that $p$ is a proper map. On the contrary, suppose $p$ is not proper, i.e., $p^{-1}(x)$ is infinite for some $x\in \Sigma$.  Define $C'\coloneqq f^{-1}(x)\subseteq \Sigma'$. So, the compact set $\widetilde f(C')$ is contained in the discrete space $p^{-1}(x)$, i.e., we can write $\widetilde f(C')=\left\{\widetilde x_1,...,\widetilde x_k\right\}$ for points $\widetilde x_1,...,\widetilde x_k\in p^{-1}(x)\subset\widetilde \Sigma$. Since $p^{-1}(x)$ is infinite, we have an $\widetilde x\in p^{-1}(x)\setminus \left\{\widetilde x_1,...,\widetilde x_k\right\}$. Thus, $\widetilde f(\Sigma')\subseteq \widetilde \Sigma\setminus \widetilde x$. Since $\widetilde \Sigma$ is orientable,  $H^n(\widetilde \Sigma, \widetilde \Sigma\setminus \widetilde y)\to H^n_c(\widetilde \Sigma)$ is an isomorphism for every $\widetilde y\in \widetilde \Sigma$. Now, the following commutative diagram shows that $\deg(\widetilde f)=0$, which contradicts the fact that $\deg(\widetilde f)\neq 0$. Hence, $p$ must be a proper map. $$\begin{tikzcd}
{ H^n_c(\widetilde \Sigma)} \arrow[rr, " \widetilde f^*"]                                             &  & { H^n_c(\Sigma')} \\
{H^n(\widetilde \Sigma, \widetilde \Sigma\setminus \widetilde x)} \arrow[rr, " \widetilde f^*"'] \arrow[u, "\cong"] &  & { H^n(\Sigma',\Sigma')=0} \arrow[u]      
\end{tikzcd}$$ 
Fix an orientation of $\widetilde \Sigma$. By \Cref{degreeonemapchecking}, $\deg(p)=\pm d$. Since $\deg(f)=\deg(p\widetilde f)=(\pm d)\cdot \deg(\widetilde f)=(\pm d)\cdot (\pm 1)$, we can conclude that $\deg(f)=\pm d(\neq 0)$, and $f$ is properly homotopic to the $d$-sheeted covering map $p.$\end{proof}

The remainder of this article is based on \emph{surgery on a proper map between non-compact surfaces}. We will first recall a few terminologies and theorems.

Let $\mathrm S$ be a connected, orientable two-dimensional manifold with or without boundary. A \emph{circle} on $\mathrm S$ is the image of an embedding of $\mathbb  S^1$ into $\mathrm S$. We say a circle $\mathcal C$ on $\mathrm S$ is a \emph{trivial circle} if $\mathcal C$ bounds an embedded disk in $\mathrm S$. A circle on $\mathrm S$ will be called \emph{primitive} if it is not trivial. Note that if the image of an embedding $f\colon \mathbb  S^1\hookrightarrow \mathrm S$ is a primitive circle on $\mathrm S$, then $[f]\in \pi_1(\mathrm S)$ is a primitive element \cite[Theorems 1.7 and  4.2]{MR214087}. A pairwise disjoint collection $\mathscr A=\{\mathcal C_\alpha\}$ of smoothly embedded circles on $\mathrm S$ is called a \emph{locally finite curve system} (in short, \textup{LFCS}) on $\mathrm S$ if for each compact subset $K$ of $\Sigma$, $\mathcal C_\alpha\cap K=\varnothing$ for all but finitely many $\alpha$. The theorem below tells that the transversal pre-image of an \text{LFCS} is again an \text{LFCS}. 
\begin{theorem}[Approximation and transversality in proper category]
    \textup{\cite[$\S3.2$]{MR4843735} Let $f\colon \Sigma'\to \Sigma$ be a proper map between non-compact surfaces, and let $\mathscr A$ be an \textup{LFCS} on $\Sigma$. Then $f$ can be properly homotoped so that it becomes smooth as well as transverse to $\mathscr A$. Thus, after such a proper homotopy, for each component $\mathcal C$ of $\mathscr A$, either $f^{-1}(\mathcal C)$ is empty or a pairwise disjoint finite collection of smoothly embedded circles on $\Sigma'$; in particular, $f^{-1}(\mathscr A)$ is an \textup{LFCS} on $\Sigma'$.}\label{whiteny}
\end{theorem}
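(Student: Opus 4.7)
The plan is to handle smoothing and transversality in two stages, each carried out by an induction over an efficient exhaustion arranged so that every perturbing homotopy has compact support, and therefore is automatically proper. This will avoid the pitfall warned about in the preliminaries (as in $H(z,t)=tz^2-z$), where a homotopy through proper maps fails to be a proper map on $\Sigma'\times[0,1]$.

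First, I would fix efficient exhaustions $\{K_i\}$ of $\Sigma'$ and $\{L_i\}$ of $\Sigma$ by compacta, and use the properness of $f$ to pass to subsequences so that $f(K_i)\subseteq \mathrm{int}(L_i)$ for every $i$. Since $\mathscr A$ is locally finite, the intersection $\mathscr A\cap L_i$ is a compact smooth $1$-submanifold of $\Sigma$. For the smoothing step I would apply Whitney's smooth approximation theorem inside local charts of $\Sigma$ and glue via a smooth partition of unity subordinate to a cover of $\Sigma'$ by the interiors of the sets $K_{i+1}\setminus K_{i-2}$. Choosing each local approximation small enough (say, so that its local displacement stays inside a single chart of $\Sigma$ near $L_{i+1}$), the resulting homotopy $H_1$ from $f$ to a globally smooth map $f_1$ satisfies $H_1(K_i\times[0,1])\subseteq L_{i+1}$ for every $i$, which suffices to force $H_1$ to be a proper map.

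Next, for transversality I would inductively modify $f_1$ so that $f_1|_{K_i}$ becomes transverse to the compact submanifold $\mathscr A\cap L_{i+1}$. At stage $i$, the classical Thom transversality theorem applied to the smooth map $f_1|_{K_i}\colon K_i\to L_{i+1}$ produces an arbitrarily small perturbation achieving transversality; by choosing the perturbation small enough in the $C^1$-norm, one can preserve transversality on $K_{i-1}$ already achieved at the previous stage, and one can realize it via a homotopy supported in $K_{i+1}\setminus \mathrm{int}(K_{i-1})$. Concatenating these compactly supported homotopies yields a single proper homotopy (locally finite in time) from $f_1$ to a smooth map $f_2$ transverse to $\mathscr A$ on all of $\Sigma'$, and prepending $H_1$ gives the proper homotopy from $f$ claimed in the statement.

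Finally, once $f_2$ is smooth and transverse to the $1$-submanifold $\mathscr A$ of $\Sigma$, the preimage $f_2^{-1}(\mathscr A)$ is a smooth $1$-submanifold of $\Sigma'$. For any component $\mathcal C$ of $\mathscr A$ the preimage $f_2^{-1}(\mathcal C)$ is compact by properness of $f_2$, hence a finite disjoint union of smoothly embedded circles; and for any compact $K\subseteq\Sigma'$, the image $f_2(K)$ is compact and meets only finitely many components of $\mathscr A$, so only finitely many components of $f_2^{-1}(\mathscr A)$ meet $K$, giving the LFCS conclusion. The main obstacle I expect is the discipline required to keep every smoothing and transversality perturbation \emph{compactly supported} rather than merely small in the weak Whitney topology; without this, one only obtains a homotopy through proper maps, which need not be a proper map on $\Sigma'\times[0,1]$.
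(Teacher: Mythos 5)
The paper does not prove this theorem in-house: it cites \cite[$\S3.2$]{sumanta}, so there is no internal argument to compare against. Your overall strategy---smoothing and then achieving transversality by an induction over an efficient exhaustion with compactly supported perturbing homotopies---is the standard one for this kind of statement and is correct in outline, including the final deduction that each $f_2^{-1}(\mathcal C)$ is a compact $1$-submanifold and that $f_2^{-1}(\mathscr A)$ is an LFCS.

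One step, however, is stated incorrectly. You assert that arranging $H_1(K_i\times[0,1])\subseteq L_{i+1}$ for every $i$ ``suffices to force $H_1$ to be a proper map.'' It does not: that inclusion only constrains the image of each $K_i$, not the image of $\Sigma'\setminus K_i$, and a homotopy can satisfy it while, say, collapsing all of $\Sigma'\times\{1\}$ into a single compact set. What is actually needed is the complementary condition: for every $j$ there is an $i$ with $H_1\big((\Sigma'\setminus K_i)\times[0,1]\big)\cap L_j=\varnothing$, so that $H_1^{-1}(L_j)\subseteq K_i\times[0,1]$ is compact. Your construction does in fact deliver this---properness of $f$ gives $f(\Sigma'\setminus K_i)\cap L_{j+1}=\varnothing$ for $i$ large, and making the local smoothing displacements small enough that no point of $\Sigma\setminus L_{j+1}$ can be pushed into $L_j$ yields the required disjointness---so this is a fixable slip rather than a fatal gap. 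The same ``control the image of the complement of $K_i$'' criterion should also be cited, rather than compact support alone, when you concatenate the infinitely many transversality stages into one homotopy on $\Sigma'\times[0,1]$; compact support at each stage guarantees pointwise stabilization, and the $C^1$-smallness you already impose then gives the uniform escape needed for properness of the concatenated homotopy.
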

Let $\Sigma$ be a non-compact surface. Suppose $\mathscr A$ is an \textup{LFCS} on $\Sigma$ and $\{\Sigma_n\}$ is an at most countable collection of bordered subsurfaces of $\Sigma$. We say $\mathscr A$ \emph{decomposes $\Sigma$ into bordered subsurfaces}, where \emph{complementary components} are  $\{\Sigma_n\}$ if  each $\Sigma_n$ is a closed subset of $\Sigma$, $\operatorname{int}(\Sigma_n)\cap \operatorname{int}(\Sigma_m)=\varnothing$ if $n\neq m$, $\cup_n\Sigma_n=\Sigma$, and $\cup_n\  \partial\Sigma_n=\cup \mathscr A$.  For example, if $\Sigma\neq \mathbb  R^2$, then there is an \textup{LFCS} $\mathscr C$ on $\Sigma$ such that $\mathscr C$ decomposes $\Sigma$ into bordered subsurfaces, and a complementary component of this decomposition is either a two-hold torus, a pair of pants, or a punctured disk. This follows from Goldman’s inductive procedure \cite[Section 2.6]{MR275436}. We call an \textup{LFCS} $\mathscr A$ on $\Sigma$  a \emph{preferred \textup{LFCS}} if it satisfies either of the following: \begin{enumerate}
    \item $\mathscr A$ is a finite collection of primitive circles on $\Sigma$. 
    \item $\mathscr A$ decomposes $\Sigma$ into bordered subsurfaces, and a complementary component of this decomposition is either a two-holed torus, a pair of pants, an annulus, or a punctured disk.
\end{enumerate} Now, we state our first theorem on surgery on a proper map. 

\begin{theorem}[Disk removal]\textup{\cite[Theorem 3.3.5]{MR4843735} Let $f\colon \Sigma'\to \Sigma$ be a smooth proper map between two non-compact surfaces, where $\Sigma'\neq \mathbb  R^2\neq \Sigma$. Suppose $\mathscr A$ is a preferred \textup{LFCS} on $\Sigma$ such that $f$ is transverse to $\mathscr A$. Then, $f$ can be properly homotoped to remove all trivial components of the \textup{LFCS} $f^{-1}(\mathscr A)$ such that each primitive component of $f^{-1}(\mathscr A)$ has an open neighborhood on which this proper homotopy is constant.} \label{diskremoval}\end{theorem}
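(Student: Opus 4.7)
The plan is to eliminate the trivial components of $f^{-1}(\mathscr{A})$ by a local disk-trading surgery, one innermost disk at a time, and then assemble the local modifications into a single proper homotopy using a compact exhaustion of $\Sigma'$. First I would identify the \emph{innermost} trivial components of $f^{-1}(\mathscr{A})$, i.e., those $C$ that bound a disk $D\subseteq\Sigma'$ whose interior is disjoint from $f^{-1}(\mathscr{A})$. These innermost disks are pairwise disjoint (else one disk would contain another's bounding circle, contradicting innermostness) and locally finite in $\Sigma'$ (any $D_i$ meeting a compact $K$ satisfies either $K\subseteq D_i$, $\partial D_i\cap K\ne\varnothing$, or $D_i\subseteq K$; the first occurs for at most one $i$ by disjointness, and the other two force $\partial D_i\cap K\ne\varnothing$, which happens finitely often by the LFCS property). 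Moreover, every trivial component of $f^{-1}(\mathscr{A})$ contains an innermost one inside its bounding disk because an LFCS meets any compact disk in only finitely many circles.

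For a single innermost disk $D$ bounded by $C$, I would perform a surgery supported in a thin enlargement $D^+\supseteq D$. By innermostness $f(\text{int}(D))$ lies in a single complementary component $\Sigma_n$ of $\mathscr{A}$, and $f(C)\subseteq C_\alpha$ for some component $C_\alpha$ of $\mathscr{A}$ on $\partial\Sigma_n$. The algebraic input is that $\overline{\Sigma_n}$ is an aspherical non-disk surface and $C_\alpha$ is $\pi_1$-injective into $\overline{\Sigma_n}$: in case (ii) of the preferred-LFCS definition, $\overline{\Sigma_n}$ is a two-holed torus, pair of pants, annulus, or punctured disk, and in case (i) no complementary component can be a disk (else $\mathscr{A}$ would contain a trivial circle, contradicting primitivity). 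The long exact sequence of the pair $(\overline{\Sigma_n},C_\alpha)$ then yields $\pi_2(\overline{\Sigma_n},C_\alpha)=0$, so $f|_D$ is homotopic rel $\partial D$ to a map with image in $C_\alpha$. I would then pick a bicollar $C_\alpha\times(-\delta,\delta)$ of $C_\alpha$ in $\Sigma$ disjoint from every other component of $\mathscr{A}$, adjoin to $D$ a thin outer collar of $C$ (which by transversality lands on the side of $C_\alpha$ opposite $\Sigma_n$) to form $D^+$, and apply a bump-function slide along the fiber direction of the bicollar that is fixed on $\partial D^+$ and pushes the image of $D^+$ entirely off $C_\alpha$. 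The resulting map $f'$ is transverse to $\mathscr{A}$, agrees with $f$ outside $D^+$, and satisfies $f'^{-1}(\mathscr{A})\cap D^+=\varnothing$, so $C$ (and nothing else in $D^+$) has been removed.

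Finally I would assemble these local surgeries into one proper homotopy. Choose the enlargements $D_i^+$ pairwise disjoint, locally finite in $\Sigma'$, and each meeting only the corresponding $C_{\alpha_i}$ among components of $f^{-1}(\mathscr{A})$. Iterate through layers: the initial innermost components simultaneously, and inductively the new innermost components exposed after each round. Scheduling, via a compact exhaustion $\{K_n\}$ of $\Sigma'$, the finitely many surgeries whose supports sit inside $K_n$ into the time subinterval $[1-2^{-n},1-2^{-n-1}]$ yields a well-defined homotopy $H\colon\Sigma'\times[0,1]\to\Sigma$. Primitive components of $f^{-1}(\mathscr{A})$ are disjoint from every $D_i^+$ by construction, so $H$ is constant on an open neighborhood of each, which is the last conclusion of the statement. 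The main obstacle is proving that $H$ is a proper homotopy: for compact $K\subseteq\Sigma$ one needs to verify that only finitely many surgery supports contribute to $H^{-1}(K)$. This uses the LFCS property of $\mathscr{A}$ in $\Sigma$ (only finitely many $C_{\alpha_i}$ have bicollars meeting $K$), the properness of the original $f$ (to bound how many $D_i^+$ can have their surgery image approach $K$ through a single bicollar), and the compact-exhaustion bookkeeping that confines each stage to a compact region of $\Sigma'$.
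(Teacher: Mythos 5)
Your local surgery is essentially right (modulo the small point that $\pi_2(\overline{\Sigma_n},C_\alpha)=0$ gives a homotopy of \emph{pairs}; to get a homotopy rel $\partial D$, first note $[f|_{\partial D}]$ is trivial in $\pi_1(C_\alpha)$ because it bounds $f|_D$ in $\overline{\Sigma_n}$ and $\pi_1(C_\alpha)\hookrightarrow\pi_1(\overline{\Sigma_n})$, pick an extension $h\colon D\to C_\alpha$, and then use $\pi_2(\overline{\Sigma_n})=0$), and your compress-to-$C_\alpha$-then-push-off two-step is a cosmetic variant of what \Cref{diskremovalremark} describes, namely compressing $f|_{D^+}$ directly into a circle $\mathcal C_\delta\subset\Sigma\setminus\cup\mathscr A$ parallel to a component of $\mathscr A$. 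The genuine gaps are in the global assembly. You never use the hypothesis $\Sigma'\neq\Bbb R^2$, yet \Cref{diskremovalremark} states that it is needed exactly to forbid an infinite \emph{outward}-nested chain $\mathcal C_1'\subset\mathcal D_2',\ \mathcal C_2'\subset\mathcal D_3',\dots$ among the trivial components. Such a chain defeats your round-by-round iteration: a point of $\mathcal D_1'$ lies in the surgery support of \emph{every} round, so the assembled $H(\cdot,t)$ need not converge, nor remain proper, as $t\to1$. Ruling it out is a separate argument (an increasing union of nested disks is open and simply connected, and by the LFCS property of $f^{-1}(\mathscr A)$ has empty frontier in $\Sigma'$, hence equals $\Sigma'$ and forces $\Sigma'\cong\Bbb R^2$); this is the only place the hypothesis enters, so it must appear.

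Your properness sketch also tracks the surgery image only through the bicollar of $C_{\alpha_i}$, but the compression stage of your surgery sweeps out a compact piece of the complementary component $\overline{\Sigma_n}$ containing $f(D_i)$, which can reach far from $C_{\alpha_i}$ (for instance when $\overline{\Sigma_n}$ is a punctured disk). To repair: (a) confine each compression homotopy inside $\overline{\Sigma_n}$, so the full trace of the $i$-th surgery lies in $\overline{\Sigma_{n_i}}\cup\text{bicollar}(C_{\alpha_i})$; (b) observe that the closures of complementary components form a locally finite family in $\Sigma$, so a compact $K$ meets only finitely many of them; (c) use properness of $f$ to see that $f^{-1}(C_\alpha)$ is compact and therefore has finitely many components, which caps---summed over all rounds---the number of surgeries whose traces can touch a fixed $C_\alpha$ or $\overline{\Sigma_n}$. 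With (a)--(c) the assembly becomes a genuine proper homotopy; without them the ``compact-exhaustion bookkeeping'' remains a hope rather than an argument. Finally, scheduling ``the surgeries whose supports sit inside $K_n$'' is not well defined when a support straddles several annuli of the exhaustion; schedule instead the $k$-th round of simultaneous innermost surgeries into $[1-2^{-k},1-2^{-k-1}]$ and use the finite-nesting-depth consequence of item one to conclude that $H$ is well defined and continuous at $t=1$.
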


\begin{remark}
    \textup{The only usage of the hypothesis $\Sigma'\neq \mathbb  R^2$ to prove \Cref{diskremoval} is to discard the possibility that there exist infinitely many components $\mathcal C_1',\mathcal C_2',...$ of $f^{-1}(\mathscr A)$ bounding the disks $\mathcal D_1',\mathcal D_2',...$, respectively such that $\mathcal C_n'$ is contained in the interior of $\mathcal D_{n+1}'$ for each $n$. Thus, if $\mathscr A$ has only finitely many components, then the same conclusion of \Cref{diskremoval} holds even if we consider $\mathbb  R^2$ as the domain of $f$. However, note that the hypothesis $\Sigma\neq \mathbb  R^2$ can never be dropped because to send every disk in $\Sigma'$ bounded by a component of $f^{-1}(\mathscr A)$ into a circle $\mathcal C_\delta\subset \Sigma\setminus \cup\mathscr A$ using homotopy long exact sequence, $\mathcal C_\delta$ must be a primitive circle parallel to a component of $\mathscr A$.}\label{diskremovalremark}
\end{remark}
At this point, we are ready to prove the easy part of \Cref{plane}.

\begin{proof}[Proof of part~\ref{plane1} of \Cref{plane}] First, assume $\Sigma=\mathbb  C$. By \Cref{reverseprocessindegreefinding}, properly homotope $f$ so that the image of $f$ misses a point $a\in \mathbb  C$. Since any translation map of $\mathbb  C$ is properly homotopic to the identity map of $\mathbb  C$, we may assume $a=0$. Thus, there exists $r>0$ such that $|f|\geq r$ because proper maps between manifolds are closed maps. Consider a compact subset $K$ of $\mathbb  C.$ Let $n$ be a positive integer such that $K\subseteq \{z\in \mathbb  C:|z|\leq nr\}$. Since $1+nt\geq 1$ for every $t\in [0,1]$, the map $\mathbb  C\times [0,1]\ni (z,t)\longmapsto (1+nt)\cdot f(z)\in \mathbb  C$ is a proper homotopy from $f$ to $g\coloneqq(n+1)f$. Certainly, $\operatorname{im}(g)\cap K=\varnothing$. So, we are done when $\Sigma=\mathbb  C.$

Now, suppose $\Sigma\neq \mathbb  C$. Let $K$ be a compact subset of $\Sigma$. Using Goldman’s inductive procedure \cite[Section 2.6]{MR275436}, find a compact bordered subsurface $\mathrm S$ of $\Sigma$ such that $K\subseteq \operatorname{int}(\mathrm S)$ and each component of $\partial \mathrm S$ is a primitive circle on $\Sigma$. By \Cref{whiteny}, we may assume $f$ is smooth as well as transverse to $\partial \mathrm S$. Therefore, $f^{-1}(\partial \mathrm S)$ is a pairwise disjoint finite collection of smoothly embedded circles on $\mathbb  C$. Since $f^{-1}(\partial \mathrm S)$ has finitely many components, each of which bounds a disk in $\mathbb  C$, by considering a procedure similar to that given in proof of \Cref{diskremoval}, we can properly homotope $f$ to a proper map $g\colon \mathbb  C\to \Sigma$ so that $g^{-1}(\partial \mathrm S)=\varnothing$ (see \Cref{diskremovalremark}). Using continuity of $g$, either $\operatorname{im}(g)\subset \mathrm S$ or $\operatorname{im}(g)\subseteq \Sigma\setminus \mathrm S$. Since $g$ is proper, the former can't happen, i.e., $\operatorname{im}(g)\subseteq \Sigma\setminus \mathrm S$, and hence $\operatorname{im}(g)\cap K=\varnothing$.
\end{proof}
Notice that, unlike the first part, in the second part of the above proof, we have not utilized the
geometric realization of the fact that \( \deg(f) = 0 \). This is because if \( f \) is a proper map from \( \mathbb{C} \) to a non-compact oriented surface \( \Sigma \) of non-zero degree, then \( \Sigma \) must be \( \mathbb{C} \), by \Cref{degreeonemapsarepi1surjective}.

Our next goal is to understand the number of proper homotopy classes of degree-zero maps from \( \mathbb{C} \) to an oriented surface \( \Sigma \). To do so, we will need the following lemma, whose proof is obtained by modifying the Alexander trick \cite[Lemma 2.1]{MR2850125}.

\begin{lemma}\textup{Let $\mathbb  B^2_*\coloneqq \{z\in \mathbb  C:0<|z|\leq 1\}$. Suppose $f\colon \mathbb  B^2_*\to \mathbb  B^2_*$ is a proper map. Then $f$ can be properly homotoped relative to  $\mathbb  S^1$ to a proper map $g\colon \mathbb  B^2_*\to \mathbb  B^2_*$ such that $g(z)=|z|\cdot f(z/|z|)$ for each $z\in\mathbb  B^2_*$.}\label{Alexandertrick2}\end{lemma}
\begin{proof}
  Define  a proper map $\mathcal H\colon \mathbb  B^2_*\times [0,1]\to \mathbb  B^2_*$  by $$\mathcal H(z,t)\coloneqq \begin{cases}(1-t)\cdot f(z/(1-t)) & \text{ if }0<|z|\leq 1-t,\\  |z|\cdot f(z/|z|) & \text{ if }1-t<|z|\leq 1.
      
  \end{cases}$$ Let $g\coloneqq \mathcal H(-,1)$. Thus $\mathcal H$ is a proper homotopy from $f$ to $g$, relative to $\mathbb S^1$.
\end{proof}

\begin{theorem}
    \textup{Let $\Sigma$ be an oriented non-compact surface, and let $e$ be an end of $\Sigma$. Denote by \( [\mathbb C, \Sigma]_e \) the set of proper homotopy classes of degree zero maps \( f\colon \mathbb{C} \to \Sigma \) such that \( \operatorname{Ends}(f) \) sends the unique end of \( \mathbb{C} \) to \( e \). If \( e \) is an isolated planar end of \( \Sigma \), then \( [\mathbb C, \Sigma]_e \) is a singleton. In contrast, if \( e \) is either non-isolated or non-planar, then \( [\mathbb C, \Sigma]_e \) is uncountable.}\label{ref}
\end{theorem}
\begin{proof}Suppose \( e \) is an isolated planar end of \( \Sigma \). Let \( f \) and \( g \) be two degree-zero maps from \( \mathbb{C} \) to \( \Sigma \) such that both \( \operatorname{Ends}(f) \) and \( \operatorname{Ends}(g) \) send the unique end of \( \mathbb{C} \) to \( e \). We need to show that \( f \) and \( g \) are properly homotopic. For this, it suffices to show that, after proper homotopies, \( f = g \). By part~\ref{plane1} of \Cref{plane}, after applying proper homotopies, we may regard \( f \) and \( g \) as maps from \( \mathbb{C} \) to \( \mathbb{B}^2_* := \{ z \in \mathbb{C} : 0 < |z| \leq 1 \} \). Choose a disk \( \mathrm{D} \subset \mathbb{C} \). Since both \( f|\mathrm{D} \) and \( g|\mathrm{D} \) are homotopic to the constant map \( \mathrm{D} \to \{1\} \subset \mathbb{B}^2_* \), the proper homotopy extension theorem \cite[Theorem 10.1.11]{MR2365352} allows us to further properly homotope \( f \) and \( g \) so that \( f(\mathrm{D}) = \{1\} = g(\mathrm{D}) \). Now, since \( \mathrm D^-\coloneqq\mathbb{C} \setminus \operatorname{int}(\mathrm{D}) \) is homeomorphic to \( \mathbb{B}^2_* \), we can apply \Cref{Alexandertrick2} to  \(f|\mathrm D^- \to \mathbb{B}^2_*\) and \( g|\mathrm D^-  \to \mathbb{B}^2_*\) to conclude that, after proper homotopies relative to $\partial \mathrm D^-=\partial \mathrm D$, \( f(z) = g(z) \in (0,1] \) for all \( z \in \mathrm D^- \).  By pasting these with \( f|\mathrm{D}= g|\mathrm{D}\), we are done.

Now, we consider the second case. Suppose \( e \) is not an isolated planar end of \( \Sigma \). In particular, \( \Sigma \) cannot be homeomorphic to \( \mathbb{C} \). Let \( \mathcal{SE}(\Sigma, e) \), called the \emph{set of strong ends determined by} \( e \), denote the set of proper homotopy classes of proper maps \( r\colon [0,\infty) \to \Sigma \) such that \( r \) represents the equivalence class \( e \). There is a well-defined map \( \Theta\colon [\mathbb{C}, \Sigma]_e \to \mathcal{SE}(\Sigma, e) \) given by restricting a degree-zero map \( f\colon \mathbb{C} \to \Sigma \) to the subspace \( [0,\infty) \subset \mathbb{C} \). This map \( \Theta \) is surjective: if \( r\colon [0,\infty) \to \Sigma \) is a proper map, then the map \( f_r\colon \mathbb{C} \ni z \longmapsto r(|z|) \in \Sigma \) is a proper extension of \( r \), and \( \deg(f_r) = 0 \). This follows from the fact that any proper map \( \mathbb{C} \to \Sigma \), with \( \Sigma \neq\mathbb{C} \), must have degree zero by \Cref{degreeonemapsarepi1surjective}. Thus, to complete the proof, it suffices to show that \( \mathcal{SE}(\Sigma, e) \) is uncountable.

 Fix a representative \(\omega\colon[0,\infty)\to \Sigma\) of \(e\). Let \(\{K_n\colon n\geq 0\}\) be a sequence of compact bordered subsurfaces of \(\Sigma\) such that \(\bigcup_n K_n = \Sigma\) and \(K_n \subset \operatorname{int}(K_{n+1})\) for each \(n \geq 0\). Moreover, for each \( n \geq 0 \), we may assume that the closure \( C \) of every component of \( \Sigma \setminus K_n \) is non-compact, and that \( C \cap K_n \) is a component of \( \partial K_n \). Such a sequence \(\{K_n:n\geq 0\}\) can be constructed, for example, via Goldman's inductive procedure. Without loss of generality, we may also assume that \(\omega([n,\infty))\) is contained in a component—say \(U_n\)—of \(\Sigma \setminus K_n\) for each \(n \geq 0\). Thus, each \(\underline{U_n}\) is a neighborhood of \(e\), and \(U_{n+1} \subset U_n\) for all \(n \geq 0\), and $\bigcap_n U_n=\varnothing$. Consider the inverse system
\begin{equation}\label{eq}
    \cdots \xrightarrow{\varphi_2} \pi_1(U_2, \omega(2)) \xrightarrow{\varphi_1} \pi_1(U_1, \omega(1)) \xrightarrow{\varphi_0} \pi_1(U_0, \omega(0))
\end{equation}
of groups, where each bonding homomorphism \(\varphi_n\) is defined by the inclusion \(U_{n+1} \hookrightarrow U_n\), together with a change of basepoint along the path \(\omega|[n,n+1]\). By  \cite[Proposition 16.1.1]{MR2365352}, there is a canonical bijection from \(\mathcal{SE}(\Sigma,e)\) to the first derived limit \(\varprojlim^1 \pi_1(U_n, \omega(n))\). Now, note that if the first derived limit of an inverse system \( \{G_n\} \), where each \( G_n \) is at most countable, is at most countable, then the system must satisfy the Mittag--Leffler condition \cite[Lemma 3.3]{MR2553079}. Therefore, to prove that \( \mathcal{SE}(\Sigma, e) \) is uncountable, it suffices to show that \eqref{eq} does not satisfy the Mittag--Leffler condition. To see this, first note that for each \( n \geq 0 \), since \( \pi_1(U_n) \cong \pi_1(U_{n+1} \cup \partial U_{n+1}) *_{\pi_1(\partial U_{n+1})} \pi_1(U_n \setminus U_{n+1}) \) and \( \partial U_{n+1} \) is a primitive circle in \( U_n \), it follows that the inclusion \( U_{n+1} \cup \partial U_{n+1} \hookrightarrow U_n \) is \( \pi_1 \)-injective. Hence, the inclusion \( U_{n+1} \hookrightarrow U_n \) is also \( \pi_1 \)-injective for all \( n \geq 0 \). Therefore, each \( \varphi_n\), being the composition of the inclusion-induced monomorphism \( \pi_1(U_{n+1}, \omega(n+1)) \to \pi_1(U_n, \omega(n+1)) \) and a basepoint-changing isomorphism, is injective. Now, since \( e \) is either non-isolated or non-planar, for every \( n \geq 0 \), there exists \( m_n \geq n+1 \) such that the closure (taken in \( \Sigma \)) of \( U_n \setminus U_{m_n} \) either has more than one end or contains more than one essential one-holed torus. Passing to a subsequence if necessary, we henceforth assume that \( m_n = n+1 \) for each \( n \geq 0 \). Thus, the inclusion-induced map \( \pi_1(U_{n+1}, \omega(n+1)) \to \pi_1(U_n, \omega(n+1)) \) is non-surjective for each \( n \geq 0 \), and hence \( \operatorname{im}(\varphi_n) \) is a proper subgroup of \( \pi_1(U_n, \omega(n)) \) for all \( n \geq 0 \). In particular, $\operatorname{im}(\varphi_0\circ \cdots \circ \varphi_n)\neq \operatorname{im}(\varphi_0\circ \cdots \circ \varphi_{n+1})$ for all $n\geq 0$. Therefore, \eqref{eq} does not satisfy the Mittag--Leffler condition. 
\end{proof}

Another surgery technique is needed to complete the proof of \Cref{plane}.

\begin{theorem}[Fiber-preserving homeomorphisms of tubular neighborhoods via local homotopy]\label{deg1tohomeo}
\textup{\cite[Theorem 3.4.3]{MR4843735} Let $f\colon \Sigma'\to \Sigma$ be a smooth proper map between two non-compact surfaces, where $\Sigma'\neq \mathbb  R^2\neq \Sigma$. Suppose $\mathscr A$ is a preferred \textup{LFCS} on $\Sigma$ such that $f$ is transverse to $\mathscr A$. If $f$ is a homotopy equivalence, then $f$ can be properly homotoped to remove all trivial components of the $f^{-1}(\mathscr A)$ as well as to map each primitive component of $f^{-1}(\mathscr A)$ homeomorphically onto a component of $\mathscr A$. Moreover, after this proper homotopy, if $\mathcal C_{\textbf{\textup{p}}}'$ and $\mathcal C$ are components of $f^{-1}(\mathscr A)$ and $\mathscr A$, respectively, such that  $f\vert \mathcal C_{\textbf{\textup{p}}}'\to \mathcal C$ is a homeomorphism, then $\mathcal C_{\textbf{\textup{p}}}'$ (resp. $\mathcal C$)  has two one-sided tubular neighborhoods $\mathcal U'$ and $\mathcal V'$ (resp. $\mathcal U$ and $\mathcal V$) with some identifications $(\mathcal U',\mathcal C_{\textbf{\textup{p}}}')\equiv(\mathcal C'_\textbf{p}\times [1,2],\mathcal C_{\textbf{\textup{p}}}'\times \{2\})\equiv (\mathcal V',\mathcal C_{\textbf{\textup{p}}}')$ \big(resp. $(\mathcal U,\mathcal C)\equiv (\mathcal C\times [1,2],\mathcal C\times \{2\})\equiv (\mathcal V,\mathcal C)$\big) such that the following hold:\begin{enumerate}
    \item $\mathcal U'\cup \mathcal V'$ is a (two-sided) tubular neighborhood of $\mathcal C_{\textbf{\textup{p}}}'$;
    \item $f\vert \mathcal U'\to \mathcal U$ and $f\vert \mathcal V'\to \mathcal V$ are homeomorphisms described by $\mathcal C_{\textbf{\textup{p}}}'\times [1,2]\ni (z,t)\longmapsto \big(f(z),t\big)\in \mathcal C\times [1,2].$
\end{enumerate}}
\end{theorem}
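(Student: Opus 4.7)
The plan is to peel off the trivial components of $f^{-1}(\mathscr A)$ via Theorem \ref{diskremoval}, exploit the homotopy-equivalence hypothesis to force each surviving component to map with degree $\pm 1$ onto its image, straighten to a homeomorphism on each such circle, and finally rectify the two normal directions using tubular neighborhood theory.

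First I would apply Theorem \ref{diskremoval} to the pair $(f,\mathscr A)$. After the resulting proper homotopy every component of $f^{-1}(\mathscr A)$ is primitive, and an open neighborhood of every primitive component is untouched, so that both transversality and the classes $\bigl[f|\mathcal C'\bigr]\in\pi_1(\Sigma)$ computed in Step 2 below are preserved. Next, for a primitive component $\mathcal C'$ of $f^{-1}(\mathscr A)$ mapping to a component $\mathcal C\subset\mathscr A$, both $[\mathcal C']\in\pi_1(\Sigma')$ and $[\mathcal C]\in\pi_1(\Sigma)$ are primitive group elements by Theorems 1.7 and 4.2 of \cite{MR214087} (this uses that components of a preferred \textup{LFCS} are primitive circles on $\Sigma$). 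Because $f$ is a homotopy equivalence, $\pi_1(f)$ is an isomorphism and hence sends primitives to primitives; writing $\pi_1(f)[\mathcal C']=[\mathcal C]^{n}$ up to conjugation, with $n$ the degree of the circle map $f|\mathcal C'\to\mathcal C$, primitivity on both sides forces $n=\pm 1$.

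A degree $\pm 1$ map of circles is homotopic to a homeomorphism. I would realize this homotopy inside a tubular neighborhood of $\mathcal C'$ chosen small enough to be disjoint from every other component of $f^{-1}(\mathscr A)$ and mapped by $f$ into a tubular neighborhood of $\mathcal C$ that avoids $\mathscr A\setminus\mathcal C$, interpolating between $f|\mathcal C'$ and a homeomorphism $\phi\colon\mathcal C'\to\mathcal C$ via a smooth bump function supported in a slightly smaller collar. Carrying out this modification simultaneously on every primitive component, the local finiteness of $f^{-1}(\mathscr A)$ guarantees that the combined motion assembles into a single proper homotopy, which in addition creates no new intersections with $\mathscr A$.

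For the level-preserving conclusion, after the previous step the germ of $f$ near each $\mathcal C'_{\mathbf p}$ is a local homeomorphism from a two-sided collar of $\mathcal C'_{\mathbf p}$ onto a two-sided collar of $\mathcal C$. I would pick compatible two-sided product identifications of these collars and label the halves as $\mathcal U',\mathcal V'$ and $\mathcal U,\mathcal V$ exactly as in the statement, then apply a final localized proper homotopy supported in those collars that rectifies $f$ to the form $(z,t)\mapsto\bigl(f(z),t\bigr)$ on each one-sided slab. The hard part will be precisely this last rectification: the tubular neighborhood theorem gives a product structure on one side at a time, but arranging both sides simultaneously and compatibly, while keeping the global motion a proper homotopy stationary outside a neighborhood of $\bigcup f^{-1}(\mathscr A)$, requires a careful parametric argument, and relies on the local finiteness of the \textup{LFCS} to paste the pointwise-supported homotopies into a proper global one.
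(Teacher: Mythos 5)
The paper does not prove this theorem; it imports it wholesale from the author's earlier work \cite[Theorem 3.4.3]{sumanta}, so there is no in-text argument to compare against. That said, your outline matches the strategy that the surrounding \Cref{deg1tohomeoremark} and \Cref{deg1tohomeoremark2} reveal the cited proof must follow: remove trivial preimage circles via \Cref{diskremoval}, deduce degree $\pm 1$ on each surviving component from the $\pi_1$-isomorphism together with Epstein's primitivity results, straighten the circle maps, and finally arrange the level-preserving product structure on the collars. Your Step 2 in particular is sound: $\pi_1(f)[\mathcal C']$ is conjugate to $[\mathcal C]^n$ with $n=\deg(f|\mathcal C')$, and since $\pi_1(f)$ carries a maximal-cyclic generator to a maximal-cyclic generator while $[\mathcal C]^n$ can only generate a maximal cyclic subgroup when $n=\pm 1$, the degree is forced.

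The one place where your write-up leaves a genuine gap is the unjustified assertion that the bump-function interpolation in Step 3 ``creates no new intersections with $\mathscr A$.'' Taken at face value, a homotopy of $f$ supported in a collar of $\mathcal C'_{\mathbf p}$ has no reason to avoid pushing nearby points back into $\mathcal C$; a careless interpolation can enlarge $f^{-1}(\mathcal C)$ inside the collar and destroy transversality, undoing what \Cref{diskremoval} just accomplished. To make this step correct you must split $f$ near $\mathcal C'_{\mathbf p}$ into its tangential $\mathcal C$-component and its normal component with respect to a product collar $\mathcal C\times[-\delta,\delta]$ of $\mathcal C$, and insist that your homotopy modify only the tangential component; because transversality keeps the normal component nonzero off $\mathcal C'_{\mathbf p}$, a tangential-only homotopy preserves both the preimage $f^{-1}(\mathcal C)$ and transversality. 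This observation is also what closes your admitted difficulty in Step 4: once the normal component has been homotoped to be exactly the collar coordinate $t$ (a one-dimensional straightening controlled by transversality), the remaining adjustment of the tangential component to the product form $(z,t)\mapsto(f(z),t)$ is a routine isotopy extension, rather than the delicate simultaneous two-sided rectification you worried about. Local finiteness of $f^{-1}(\mathscr A)$ then lets you assemble these local, collar-supported homotopies into a single proper homotopy, exactly as you say.
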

\begin{remark}
    \textup{Instead of assuming that $f$ is a homotopy equivalence in \Cref{deg1tohomeo}, if we assume that for a primitive component $\mathcal{C}_\textbf{p}'$ of $f^{-1}(\mathscr{A})$, there exists a component $\mathcal{C}$ of $\mathscr{A}$ such that $f(\mathcal C_\textbf{p}')\subset \mathcal C$ and the restriction $f\vert \mathcal{C}_\textbf{p}' \to \mathcal{C}$ has degree $n \neq 0$, then $f$ can be properly homotoped so that $f\vert \mathcal{C}_\textbf{p}' \to \mathcal{C}$ becomes an $n$-sheeted covering map. Moreover, after this proper homotopy, $f$ maps each small enough one-sided tubular neighborhood $\mathcal C_{\textbf{\textup{p}}}'\times [1,2],\ \mathcal C_\textbf{p}'\times \{2\}\equiv\mathcal C_\textbf{p}'$ of $\mathcal C_\textbf{p}'$ (on either side of $\mathcal C_\textbf{p}'$) onto a small enough one-sided tubular neighborhood $\mathcal C\times [1,2],\ \mathcal C\times \{2\}\equiv\mathcal C$ of $\mathcal C$ in the following manner: $\mathcal C_{\textbf{\textup{p}}}'\times [1,2]\ni (z,t)\longmapsto \big(f(z),t\big)\in \mathcal C\times [1,2].$}\label{deg1tohomeoremark}
\end{remark}
\begin{remark}
    \textup{Instead of assuming that $f$ is a homotopy equivalence in \Cref{deg1tohomeo}, if we assume that $f$ is $\pi_1$-injective, then $f$ can be properly homotoped to remove all trivial components from $f^{-1}(\mathscr A)$ as well as to send each component $\mathcal{C}_\textbf{p}'$ of $f^{-1}(\mathscr{A})$ onto a component $\mathcal{C}$ of $\mathscr{A}$ such that the restriction $f\vert \mathcal{C}_\textbf{p}' \to \mathcal{C}$ is a covering map. Moreover, after this proper homotopy, $f$ maps each small enough one-sided tubular neighborhood $\mathcal C_{\textbf{\textup{p}}}'\times [1,2],\ \mathcal C_\textbf{p}'\times \{2\}\equiv\mathcal C_\textbf{p}'$ of $\mathcal C_\textbf{p}'$ (on either side of $\mathcal C_\textbf{p}'$) onto a small enough one-sided tubular neighborhood $\mathcal C\times [1,2],\ \mathcal C\times \{2\}\equiv\mathcal C$ of $\mathcal C$ in the following manner: $\mathcal C_{\textbf{\textup{p}}}'\times [1,2]\ni (z,t)\longmapsto \big(f(z),t\big)\in \mathcal C\times [1,2].$}\label{deg1tohomeoremark2}
\end{remark}
Note that the proofs of \Cref{deg1tohomeo}, \Cref{deg1tohomeoremark}, and \Cref{deg1tohomeoremark2} all rely on applying the following fact within small enough one-sided tubular neighborhoods of primitive circles in a fiber-preserving manner: up to homotopy, any map \( \mathbb{S}^1 \to \mathbb{S}^1 \) is of the form \( z \longmapsto z^n \) for some integer \( n \). Consequently, these proof techniques extend naturally to the setting of bordered surfaces, where the proper homotopy may also be taken relative to the boundary, provided the tubular neighborhoods are disjoint from the boundaries of both the domain and the codomain.

The proof of part~\ref{plane2} of \Cref{plane} also requires the following straightforward lemmas.

\begin{lemma}
    \textup{If $\theta\colon \mathbb  S^1\times [0,1]\to \mathbb  S^1\times [0,1]$ is a map which sends $\mathbb  S^1\times \{0,1\}$ into $\mathbb  S^1\times \{0\}$, then $\varphi$ can be homotoped relative to  $\mathbb  S^1\times \{0,1\}$ to send $\mathbb  S^1\times [0,1]$ into $\mathbb  S^1\times \{0\}$. } \label{particularannuluscompression}
\end{lemma}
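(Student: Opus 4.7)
The plan is to exploit the fact that the second factor $[0,1]$ of the target annulus is contractible. Write $\theta$ in coordinates as $\theta(z,t) = \bigl(\theta_1(z,t), \theta_2(z,t)\bigr)$, where $\theta_1 \colon \mathbb{S}^1 \times [0,1] \to \mathbb{S}^1$ and $\theta_2 \colon \mathbb{S}^1 \times [0,1] \to [0,1]$. The hypothesis that $\theta\bigl(\mathbb{S}^1 \times \{0,1\}\bigr) \subseteq \mathbb{S}^1 \times 0$ translates into the single condition $\theta_2(z,0) = \theta_2(z,1) = 0$ for all $z \in \mathbb{S}^1$; no condition at all is imposed on $\theta_1$ along the boundary beyond what the map already does.

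Since $[0,1]$ is a convex subset of $\mathbb{R}$, I can linearly contract $\theta_2$ to the zero map while keeping $\theta_1$ fixed. Concretely, define
\[
H \colon \mathbb{S}^1 \times [0,1] \times [0,1] \longrightarrow \mathbb{S}^1 \times [0,1], \qquad H(z,t,s) \coloneqq \bigl(\theta_1(z,t),\, (1-s)\,\theta_2(z,t)\bigr).
\]
Then $H(\cdot,\cdot,0) = \theta$ and $H(\cdot,\cdot,1)$ takes values in $\mathbb{S}^1 \times 0$, as required. Moreover, for every $(z,s) \in \mathbb{S}^1 \times [0,1]$ and $t \in \{0,1\}$ we have $(1-s)\,\theta_2(z,t) = 0$, so $H(z,t,s) = (\theta_1(z,t), 0) = \theta(z,t)$, which shows that the homotopy is stationary on $\mathbb{S}^1 \times \{0,1\}$.

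There is essentially no obstacle here; the only subtlety is keeping track of which factor one is deforming, and noticing that the boundary hypothesis kills exactly the second coordinate, so that linear retraction of $\theta_2$ to $0$ is automatically a homotopy relative to $\mathbb{S}^1 \times \{0,1\}$. The first coordinate $\theta_1$ is left untouched throughout, so no wrapping/lifting argument or universal cover considerations are needed.
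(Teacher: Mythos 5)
Your proof is correct. The paper states \Cref{particularannuluscompression} without proof, declaring its proof ``straightforward''; the argument you give — splitting $\theta$ into a $\mathbb{S}^1$-component $\theta_1$ and a $[0,1]$-component $\theta_2$, and linearly contracting $\theta_2$ to $0$ (which is automatically stationary on $\mathbb{S}^1\times\{0,1\}$ since $\theta_2$ already vanishes there) — is exactly the elementary argument intended, and there is nothing to add.
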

\begin{lemma}\label{pushingleft}
\textup{Let $\varphi\colon \mathbb  S^1\times [1,3]\to \mathbb  S^1\times [1,2]$ be a map such that $\varphi(\mathbb  S^1\times \{r\})\subseteq \mathbb  S^1\times \{r\}$  for $1\leq r\leq 2$ and $\varphi\left(\mathbb  S^1\times [2,3]\right)\subseteq \mathbb  S^1\times \{2\}$. Then $\varphi$ can be homotoped relative to   $\mathbb  S^1\times \{1,3\}$ to satisfy $\varphi^{-1}(\mathbb  S^1\times \{2\})=\mathbb  S^1\times \{3\}$.} \end{lemma}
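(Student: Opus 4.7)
My plan is first to unpack the two hypotheses and observe that they actually pin down the interval-coordinate $\pi_2\circ\varphi$ as an explicit function of $r$ alone, and then to modify only this interval coordinate by a straight-line homotopy. Concretely, the condition $\varphi(\Bbb S^1\times r)\subseteq \Bbb S^1\times r$ for $r\in[1,2]$ says $\pi_2\circ\varphi(z,r)=r$ on that slab, and the condition $\varphi(\Bbb S^1\times[2,3])\subseteq \Bbb S^1\times 2$ says $\pi_2\circ\varphi(z,r)=2$ on the other slab. Setting $\alpha\coloneqq \pi_1\circ\varphi\colon \Bbb S^1\times[1,3]\to \Bbb S^1$, we can therefore write
$$\varphi(z,r)=\bigl(\alpha(z,r),\,u(r)\bigr),\qquad u(r)\coloneqq \min(r,2).$$
The sole reason that $\varphi^{-1}(\Bbb S^1\times 2)=\Bbb S^1\times[2,3]$ is too large is that $u^{-1}(2)=[2,3]$.

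I would next pick any continuous replacement $v\colon [1,3]\to[1,2]$ with $v(1)=1$, $v(3)=2$, and $v^{-1}(2)=\{3\}$, for instance the affine map $v(r)=1+(r-1)/2$, and define the straight-line homotopy in the interval factor
$$\varphi_s(z,r)\coloneqq \bigl(\alpha(z,r),\,(1-s)u(r)+sv(r)\bigr),\qquad s\in[0,1].$$
Convexity of $[1,2]$ makes this land in $\Bbb S^1\times[1,2]$; continuity in $(z,r,s)$ is clear; and because $u$ and $v$ take the same values at the endpoints $r=1,3$, the homotopy is constant on $\Bbb S^1\times\{1,3\}$, hence rel.\ $\Bbb S^1\times\{1,3\}$. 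Finally $\varphi_0=\varphi$, while the interval coordinate of $\varphi_1$ is $v(r)$, which equals $2$ only when $r=3$, so $\varphi_1^{-1}(\Bbb S^1\times 2)=\Bbb S^1\times 3$, as required.

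There is essentially no serious obstacle to overcome: the two hypotheses already force the $\Bbb S^1$-dependence of the second coordinate to be trivial, so the circle-component $\alpha$ of $\varphi$ need not be touched at all; the mismatch between the current preimage $\Bbb S^1\times[2,3]$ and the desired preimage $\Bbb S^1\times 3$ lives purely in the interval direction. The only real choice is to trade the piecewise-linear $u$, whose level set over $2$ is a thick band, for a strictly increasing $v$ with the same boundary values, and convexity of $[1,2]$ makes the straight-line homotopy between them automatic.
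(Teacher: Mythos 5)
Your proof is correct and is precisely the ``straightforward'' argument the paper alludes to without spelling out: the two hypotheses force the interval coordinate of $\varphi$ to be the $z$-independent function $u(r)=\min(r,2)$, and a straight-line homotopy in the second factor from $u$ to a strictly increasing reparametrization $v$ (fixing $r=1,3$) does the job, with convexity of $[1,2]$ guaranteeing the homotopy stays in the target.
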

So we now prove part~\ref{plane2} of \Cref{plane}.

\begin{proof}[Proof of part~\ref{plane2} of \Cref{plane}] By \Cref{degreeonemapsarepi1surjective}, $\Sigma=\mathbb  C$. At first, suppose $n>0$. Let $\mathscr{A}$ be the collection of all circles in $\mathbb{C}$ centered at $0$ with integer radii. Then $\mathscr A$ is an \textup{LFCS} on $\mathbb  C$. By \Cref{whiteny}, we may assume $f$ is a smooth proper map of degree $n>0$ as well as transverse to $\mathscr A$. Therefore, $f^{-1}(\mathscr A)$ is an \textup{LFCS} by \Cref{whiteny}. Now, \Cref{reverseprocessindegreefinding} gives a proper map $g$ properly homotopic to $f$ with the following properties: the map $g$ equals $f$ outside a compact subset $K$ of the form $\{z\in \mathbb  C:|z|\leq r\}$ for some $r>0$, and there exists a disk $D$ in $\mathbb  C$ such that $g^{-1}(D)$ is the union of pairwise disjoint disks $D_1,..., D_n$ in $\mathbb  C$ such that $g\vert D_j \to D$ is an orientation-preserving homeomorphism for each $j=1,...,n$. Without loss of generality, we may assume $K$ contains $\cup_{j=1}^n D_j$. Consider the bordered surfaces $\mathbf S'\coloneqq \mathbb  C\setminus \cup_{j=1}^n \operatorname{int}(D_k)$ and $\mathbf S\coloneqq \mathbb  C\setminus \operatorname{int}(D)$. Notice that $g(\mathbf S')\subseteq \mathbf S$ and $g$ sends each component of  $\partial \mathbf S'=g^{-1}(\partial \mathbf S)$ homeomorphically onto $\partial \mathbf S$. Denote the restriction map $g\vert \mathbf S'\to \mathbf S$ by $g_\text{res}$. Since $\mathscr A$ is an \textup{LFCS}, there exists a component $\mathcal C$ of $\mathscr A$ such that $\mathcal C\cap \big(f(K)\cup g(K)\big)=\varnothing$ and the interior of $\mathcal C$ (in $\mathbb  C$) contains $D$. Since $f=g$ on $\mathbb  C\setminus K$, we can say that $f^{-1}(\mathcal C)=g^{-1}(\mathcal C)=g^{-1}_\text{res}(\mathcal C)$ doesn't intersect with $K$, $g_\text{res}$ is smooth outside the compact subset $\mathbf K'\coloneqq \mathbf S'\cap K$ of $\mathbf S'$, and $g_\text{res}$ is transverse to $\mathcal C$. In particular, each component of $g_\text{res}^{-1}(\mathcal C)$ is either a trivial circle or primitive circle on $\mathbf S'$ lying in $\mathbf S'\setminus \mathbf K'=\mathbb  C\setminus K=\{z\in \mathbb  C:|z|>r\}$. 
\begin{figure}
\centering    
\includegraphics[width=1\textwidth]{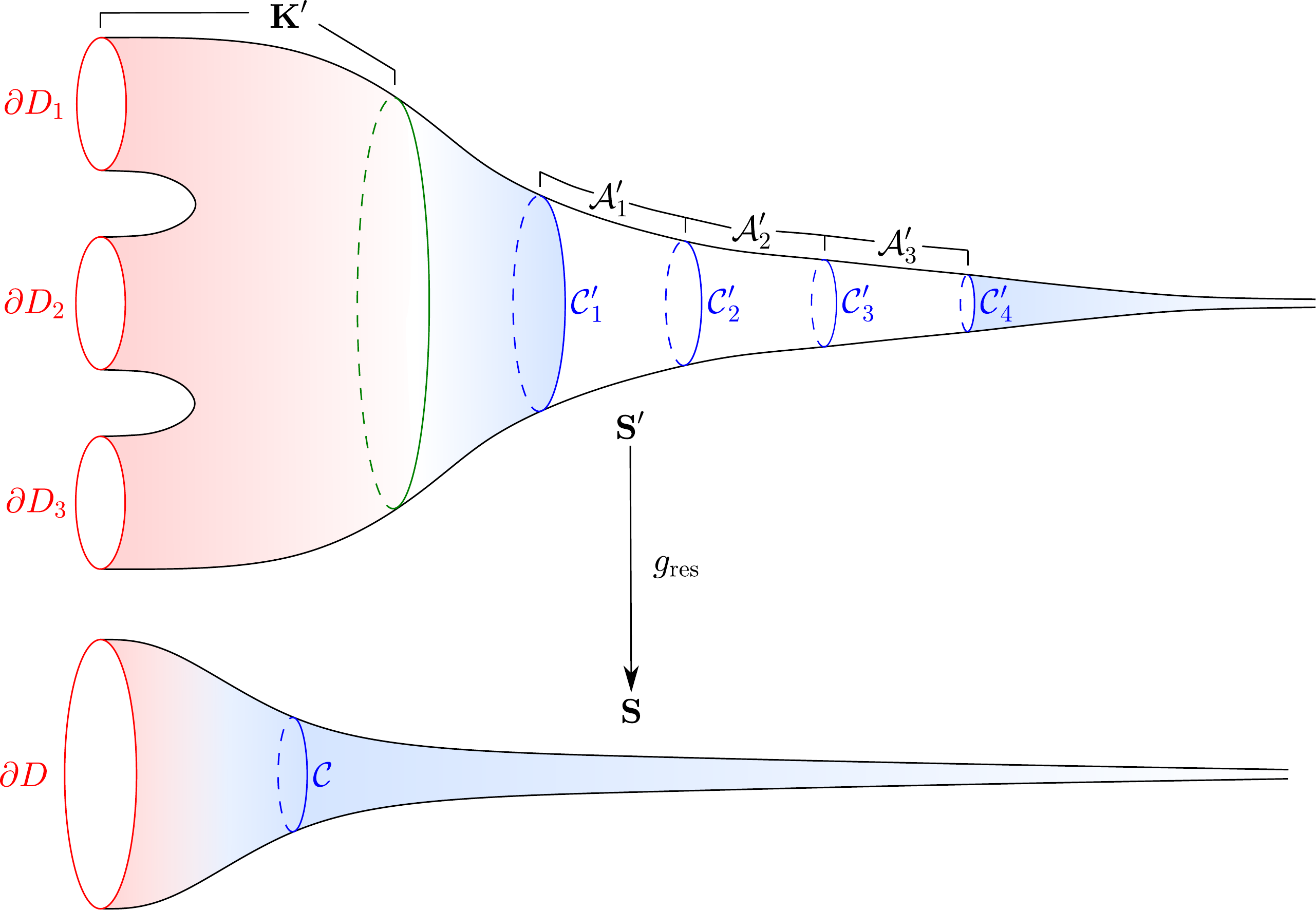}
\caption[Classification of non-zero self-maps of the plane]{The description of $g_\text{res}\colon \mathbf S'\to \mathbf S$ after removing trivial components from $g_\text{res}^{-1}(\mathcal C)$.}\label{R2}
\end{figure}

Now, we aim to properly homotope $g_\text{res}\colon \mathbf S'\to \mathbf S$ to a proper map $\widetilde{g_\text{res}}\colon\mathbf S'\to \mathbf S$ such that $\mathcal C'\coloneqq\widetilde{g_\text{res}}^{-1}(\mathcal C)$ is a single circle in $\operatorname{int}(\mathbf S')$ and $\widetilde{g_\text{res}}\vert \mathcal C'\to \mathcal C$ is an $n$-sheeted covering map, but here we want every proper homotopy to be relative to $\partial \mathbf S'$.  So, here are the steps.
    
    At first, applying \Cref{reverseprocessindegreefinding} on $g_\text{res}$, and then using \Cref{degreeonemapchecking}, we can tell $\deg\left(g_\text{res}\right)=\deg(g)$, i.e., in particular, $\deg\left(g_\text{res}\right)\neq 0$. Recall that a proper homotopy relative to the boundary doesn't change the degree. Therefore, $g_\text{res}$ remains surjective after any proper homotopy relative to  $\partial \mathbf S'$; see \Cref{non-surjectivepropermaphasdegreezero}.

    Since $\mathcal C$ is a primitive circle on $\mathbf S$, and the proof of \Cref{diskremoval} adapts readily to the current setting of bordered surfaces, after a proper homotopy $\mathcal H_1$, we may assume each component of the non-empty set $g^{-1}_\text{res}(\mathcal C)$ is a primitive circle on $\mathbf S'$. Note that the homotopy that appears in the proof of \Cref{diskremoval} is, in fact, relative to the complement of every small enough neighborhood of the union of all disks bounded by trivial circles. Thus, we may assume $\mathcal{H}_1$ is relative to $\partial \mathbf{S'}$.

   Therefore, for some positive integer $m$, we can write $g^{-1}_\text{res}(\mathcal C)=\mathcal C_1'\sqcup \cdots \sqcup\mathcal C_m'$ (see \Cref{R2} for labeling of these circles), where each $\mathcal C_i'$ is a primitive circle on $\mathbf S'$. Thus $g^{-1}_\text{res}(\mathcal C)$ decomposes $\mathbf S'$ into a copy of $S_{0,n+1}$, $(m-1)$-copies $\mathcal A_1',...,\mathcal A_{m-1}'$ of $S_{0,2}$ (see \Cref{R2} for labeling of these annuli), and a copy of $S_{0,1,1}$. Similarly, $\mathcal C$ decomposes $\mathbf S$ into a copy of $S_{0,2}$ and a copy of $S_{0,1,1}$.

   Restricting $g_\text{res}$ on those copies, we get a map $\xi\colon S_{0,n+1}\to S_{0,2}$ such that $\xi\vert \partial D_j\to \partial D$ is an orientation-preserving homeomorphism for each $j=1,...,n$ and $\xi$ sends the other boundary component $\partial S_{0,n+1}\setminus \cup_{j=1}^n \partial D_j$ into $\mathcal C$. The naturality of the homology long exact sequence and \cite[Exercise 31 of Section 3.3]{MR1867354} give the following commutative diagram 
$$\begin{tikzcd}
{H_2\left(S_{0,n+1}, \partial S_{0,n+1}\right)\cong \mathbb  Z} \arrow[d, "\times \deg(\xi)"'] \arrow[rr, "1\longmapsto \overset{n+1}{\oplus} 1"] &  & { \overset{n+1}{\oplus}\mathbb  Z\cong H_1\left(\partial S_{0,n+1}\right)} \arrow[d, "\overset{n+1}{\oplus} 1\longmapsto n\oplus \ell"] \\
{H_2\left(S_{0,2}, \partial S_{0,2}\right)\cong \mathbb  Z} \arrow[rr, " 1\longmapsto \overset{2}{\oplus}1"']                                     &  & { \overset{2}{\oplus}\mathbb  Z\cong H_1\left(\partial  S_{0,2}\right)}  \end{tikzcd}$$ where $\ell\coloneqq\deg\left(\xi\vert \partial S_{0,n+1}\setminus \cup_{j=1}^n \partial D_j\to \mathcal C\right)$. The commutativity of the diagram tells $\deg(\xi)=n$ and $\ell=n$. Since any two components of $g_\text{res}^{-1}(\mathcal C)$ co-bound an annulus in $\mathbf S'$ and any two homotopic maps $\mathbb  S^1\to \mathbb  S^1$ have the same degree, $\deg\left(g_\text{res}\vert \mathcal C_i'\to \mathcal C\right)=\pm n$ for each $i=1,...,m$ (the minus sign comes because two boundary components of an oriented annulus are oppositely oriented). Thus, since the proof of \Cref{deg1tohomeo}, together with \Cref{deg1tohomeoremark}, adapts readily to the current setting of bordered surfaces, after a proper homotopy $\mathcal H_2$, we may assume that $g_\text{res}\vert \mathcal C_i'\to \mathcal C$ is an $n$-sheeted covering map for each $i=1,...,m$. Since the homotopy mentioned in the proof of \Cref{deg1tohomeo} is relative to the complement of a small neighborhood of $g_\text{res}^{-1}(\mathcal C)=\mathcal C_1'\sqcup \cdots \sqcup \mathcal C_m'$, one can ensure that $\mathcal{H}_2$ is relative to $\partial \mathbf{S'}$.

Now, we remove the components $\mathcal C_2',..., \mathcal C_m'$ from  $g_\text{res}^{-1}(\mathcal C)$. First, consider the annulus $\mathcal A_1'$. By continuity, $g_\text{res}(\mathcal A_1')$ will be one of two sides of $\mathcal C$ in $\mathbf S$, i.e., the compact set $g_\text{res}(\mathcal A_1')$ is contained in a one-sided tubular neighborhood of $\mathcal C$ in $\mathbf S$. Notice that if $\mathbf S''$ is a compact bordered subsurface of $\mathbf S'$ such that $\mathbf S''\cap \partial \mathbf S'=\varnothing$, then a homotopy of $g_\text{res}\vert \mathbf S''$ relative to  $\partial \mathbf S''$ can be extended to a proper homotopy of $g_\text{res}$ relative to  $\mathbf S'\setminus \operatorname{int}\left(\mathbf S''\right)\ \left(\supset\partial \mathbf S'\right)$. Thus, after a homotopy of $g_\text{res}\vert \mathcal A_1'$ relative to  $\partial \mathcal A_1'$ (see \Cref{particularannuluscompression}), we may assume $g_\text{res}\left( \mathcal A_1'\right)=\mathcal C$. Applying this argument to each of $\mathcal A_2',...,\mathcal A_{m-1}'$, after a homotopy of $g_\text{res}\vert \mathcal A_1'\cup \cdots \cup \mathcal A_{m-1}'$ relative to  $\partial\left(\mathcal A_1'\cup \cdots \cup \mathcal A_{m-1}'\right)$, we may assume $g_\text{res}\left(\mathcal A_1'\cup \cdots \cup \mathcal A_{m-1}'\right)=\mathcal C$.  Further, the part $2.$ of \Cref{deg1tohomeo} tells that there exists a one-sided tubular neighborhood $\mathcal C_m'\times [1,2]\equiv\mathcal V'\subset S_{0,1,1}$ of $\mathcal C_m'\equiv\mathcal C_m'\times \{2\}$ and a one-sided tubular neighborhood $\mathcal C\times [1,2]\equiv\mathcal V$ of $\mathcal C\equiv\mathcal C\times\{2\}$ such that $g_\text{res}\left(z,r\right)=(g_\text{res}(z), r)$ for all $(z,r)\in \mathcal C_m'\times [1,2]$. Applying \Cref{pushingleft}, after a homotopy of $g_\text{res}\vert \left(\mathcal A_1'\cup \cdots \cup \mathcal A_{m-1}'\right)\cup \mathcal V'$ relative to  $\mathcal C_1'\sqcup \left(\mathcal C_m'\times \{1\}\right)$, we have $g_\text{res}^{-1}(\mathcal C)=\mathcal C_1'$ and $g_\text{res}\vert \mathcal C_1'\to \mathcal C$ is an $n$-sheeted covering map.

Since every proper homotopy of $g_\text{res}$ has been done relative to  $\partial \mathbf S'$, pasting $g\vert g^{-1}(D)\to D$ with all those proper homotopies, we can say that $g\colon \mathbb  C\to \mathbb  C$ can be properly homotoped so that $g\vert g^{-1}(\mathcal C)\to \mathcal C$ becomes an orientation-preserving $n$-sheeted covering map from a (single) circle onto a circle. Since an $n$-sheeted orientation-preserving covering $\mathbb  S^1\to \mathbb  S^1$ is of the form $\mathbb  S^1\ni z\longmapsto h(z^n)\in \mathbb  S^1$ for some orientation-preserving self-homomorphism $h\colon \mathbb  S^1\to \mathbb  S^1$, by an application of \Cref{disk}, after a proper homotopy, we may assume $g\vert g^{-1}(\mathbb  S^1)=\mathbb  S^1\ni z\longmapsto z^n\in \mathbb  S^1$. Now, applying \Cref{Alexandertrick} and \Cref{Alexandertrick2}, we can conclude that $g$, and consequently $f$ as well, is properly homotopic to $\mathbb  C\ni z\longmapsto z^n\in \mathbb  C$. This completes the proof of the case when $\deg(f)=n>0$.

Now, assume $\deg(f)=n<0$. Since the complex conjugation is an orientation-reversing self-homeomorphism of $\mathbb  C$, the map $\overline f\colon \mathbb  C\ni z\longmapsto \overline{f(z)}\in \mathbb  C$ is orientation-preserving of degree $-n$. Thus, $\overline f$ is properly homotopic to $\mathbb  C\ni z\longmapsto z^{-n}\in \mathbb  C$ by the previous case. Therefore, $f$ is properly homotopic to $\mathbb  C\ni z\longmapsto \overline z^{-n}\in \mathbb  C$.   
\end{proof}
It is well known that the annulus theorem establishes the following theorem, which is the main tool for showing that the connected sum operation of topological $n$-manifolds is independent of the choice of locally flat embeddings of the $n$-balls. The analog of \Cref{disk} for smooth manifolds is called the Cerf-Palais’ disk theorem \cite[Theorem 5.5]{MR116352} \cite[Theorem B]{MR117741}.
\begin{theorem}
    \textup{\cite[Topological Ball Embedding Theorem]{friedlbook} \cite[Theorem 1.3]{MR181989} Let $M$ be an oriented, connected $n$-dimensional topological manifold without boundary. If $\varphi, \psi\colon \mathbb  B^n\hookrightarrow M$ are two locally flat orientation-preserving embeddings of the closed unit balls into $M$, then there exists a homotopy $\mathcal H\colon M\times [0,1]\to M$ through homeomorphisms such that $\mathcal H(-,0)=\operatorname{Id}_M$ and $\mathcal H(-,1)\circ \varphi=\psi$. In particular, $\mathcal H$ is a proper homotopy.}\label{disk}
\end{theorem}

In the proof of part~\ref{plane2} of \Cref{plane}, we also use the following fact, whose proof is the same as that of \Cref{Alexandertrick2}.

 \begin{lemma}\textup{Let $\mathbb  B^2\coloneqq \{z\in \mathbb  C:0\leq|z|\leq 1\}$. Suppose $f\colon \mathbb  B^2\to \mathbb  B^2$ is a map such that $f^{-1}(\mathbb  S^1)= \mathbb  S^1$. Then $f$ can be homotoped relative to  $\mathbb  S^1$ to a map $g\colon \mathbb  B^2\to \mathbb  B^2$ such that $g(0)=0$ and $g(z)=|z|\cdot f(z/|z|)$ if $z\neq 0$.}\label{Alexandertrick}\end{lemma}

 Now we prove \Cref{puncturedplane}.

\begin{lemma}
\textup{Let $f$ be a $\pi_1$-injective proper map from $\mathbb  C^*$ to a non-compact oriented surface $\Sigma$. Suppose $\deg(f)=0$. Then $\Sigma$ contains a properly embedded essential punctured disk $\mathrm D_*$ such that $f$ can be properly homotoped so that $f(\mathbb  C^*)\subseteq \mathrm D_*$. }\label{puncturedplane1'lemma}
\end{lemma}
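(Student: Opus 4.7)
The plan is to reduce the problem, via covering-space theory, to a proper self-map of $\Bbb C^*$ of degree zero, then use the resulting constraint on the behavior at infinity to confine the map to a punctured-disk neighborhood of a single end, and finally push this confinement back down through the covering to produce the desired essential punctured disk in $\Sigma$.

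First, I would let $p \colon \widetilde \Sigma \to \Sigma$ be the covering corresponding to the infinite-cyclic subgroup $\text{im }\pi_1(f)$; since any non-compact surface with $\pi_1 = \Bbb Z$ is $\Bbb C^*$ \cite[Theorem 3.1.9]{sumanta}, we have $\widetilde \Sigma \cong \Bbb C^*$. Lifting $f$ gives $\widetilde f \colon \Bbb C^* \to \widetilde \Sigma$, which by \Cref{techlemma} is proper, and which (being a $\pi_1$-iso between $K(\Bbb Z,1)$'s) is a homotopy equivalence. The hypothesis $\deg(f)=0$ together with \Cref{techlemma} forces $\deg(\widetilde f)=0$, for otherwise $p$ would be a finite-sheeted cover and $\deg(f)=\deg(p)\deg(\widetilde f)$ would be non-zero.

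Next, since $\deg(\widetilde f)=0$, the contrapositive of \Cref{endsdomaination} tells us that $\text{Ends}(\widetilde f)\colon\{e_-,e_+\}\to\text{Ends}(\widetilde\Sigma)$ is not surjective, so both ends of the domain land at a common end $\tilde e$ of $\widetilde\Sigma$. I would then choose a small essential punctured-disk neighborhood $\widetilde{\mathrm D_*}$ of $\tilde e$. The ends condition supplies a compact $K\subset\Bbb C^*$ with $\widetilde f(\Bbb C^*\setminus K)\subseteq\widetilde{\mathrm D_*}$. The inclusion $\widetilde{\mathrm D_*}\hookrightarrow\widetilde\Sigma$ is a $\pi_1$-iso between aspherical surfaces and hence a homotopy equivalence, so $\pi_n(\widetilde\Sigma,\widetilde{\mathrm D_*})=0$ for all $n$; this lets me homotope $\widetilde f\vert K$ rel $\partial K$ to a map into $\widetilde{\mathrm D_*}$, and extending by $\widetilde f$ on the complement yields a compactly-supported, hence proper, homotopy from $\widetilde f$ to some $\widetilde g$ with $\widetilde g(\Bbb C^*)\subseteq\widetilde{\mathrm D_*}$.

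Finally, I would set $\mathrm D_*\coloneqq p(\widetilde{\mathrm D_*})$. A short deck-transformation analysis shows that $p\vert\widetilde{\mathrm D_*}$ is a finite cover onto its image (an infinite-cyclic end-stabilizer would realize a $\Bbb Z$-sheeted cover of a punctured disk by another punctured disk, incompatible with $\pi_1=\Bbb Z$ on both), so $\mathrm D_*$ is itself a punctured disk in $\Sigma$, and its essentiality follows from injectivity of $p_*$ together with the $\pi_1$-iso inclusion of $\widetilde{\mathrm D_*}$ into $\widetilde\Sigma$. Then $f=p\circ\widetilde f$ is properly homotopic to $p\circ\widetilde g$, and the latter sends all of $\Bbb C^*$ into $\mathrm D_*$. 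The step I expect to be hardest is the proper homotopy in the third paragraph: although the relative homotopy exists formally from the homotopy equivalence of the pair $(\widetilde\Sigma,\widetilde{\mathrm D_*})$, one must carry it out compactly on $K$ so that it glues to $\widetilde f$ on the complement and thereby preserves properness; the deck-group step to see that $\mathrm D_*$ is a punctured disk is subtle but routine.
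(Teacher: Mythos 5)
Your reduction to the covering $p\colon\widetilde\Sigma\to\Sigma$ with $\widetilde\Sigma\cong\mathbb C^*$ and the lift $\widetilde f$ is sound, and so is the deduction that $\deg(\widetilde f)=0$. The first slip is logical: you infer that $\text{Ends}(\widetilde f)$ is not surjective ``by the contrapositive of \Cref{endsdomaination},'' but the contrapositive of that lemma is $\text{Ends not surjective}\Rightarrow\deg=0$, not the converse you are invoking, and the converse fails in general --- the degree-zero proper map $\mathbb S^1\times\mathbb R\ni(z,t)\longmapsto(1,t)\in\mathbb S^1\times\mathbb R$ has $\text{Ends}$ a bijection. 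Here your conclusion is rescued only by the $\pi_1$-injectivity of $\widetilde f$, and proving it requires exactly the transversality/sign-alternation analysis you were hoping to sidestep.

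The fatal gap is in your final paragraph. You set $\mathrm D_*:=p(\widetilde{\mathrm D_*})$ and assert that a ``deck-transformation analysis'' shows $p\vert\widetilde{\mathrm D_*}$ is a finite cover onto a punctured disk. But for $\Sigma\neq\mathbb C^*$ the subgroup $\text{im }\pi_1(f)\cong\mathbb Z$ has infinite index in a non-abelian free group and is generically not normal, so the deck group of $p$ is trivial and there is nothing to analyze. More to the point, $p$ restricted to a punctured-disk neighborhood of an end of $\widetilde\Sigma\cong\mathbb C^*$ need not be injective, and its image need not be a subsurface of $\Sigma$ at all: the restriction is an embedding onto an essential punctured disk precisely when the end $\widetilde e$ ``lies over'' an isolated planar end of $\Sigma$, which is equivalent to saying that $\text{im }\pi_1(f)$ is carried by a curve bounding an essential punctured disk --- and that is exactly the content of the lemma, so your argument is circular at the crucial step. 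The paper avoids the covering-space pushforward entirely by working downstairs with $f$ itself: it takes a Goldman LFCS decomposition of $\Sigma$, applies \Cref{whiteny} and \Cref{deg1tohomeoremark2} together with the surgery of \Cref{pushingleft} and the fact that any two primitive circles on $\mathbb C^*$ co-bound an annulus, and thereby compresses $\text{im}(f)$ past a single decomposition circle $\mathcal C$ and into the punctured-disk complementary component on its far side.
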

\begin{proof}
    Notice that $\Sigma\neq \mathbb  R^2$ because $\pi_1(f)$ is injective. So, there exists an \textup{LFCS} $\mathscr A$ on $\Sigma$ such that $\mathscr A$ decomposes $\Sigma$ into bordered subsurfaces, and a complementary component of this decomposition is homeomorphic to either $S_{1,1}$, $S_{0,3}$, or $S_{0,1,1}$. Notice that each component of $\mathscr A$ is a primitive circle on $\Sigma$. By \Cref{whiteny}, we may assume $f$ is smooth as well as transverse to $\mathscr A$. Therefore, $f^{-1}(\mathscr A)$ is an \textup{LFCS} on $\mathbb  C^*$ by \Cref{whiteny}. Since each component of $\mathscr A$ is a primitive circle on $\Sigma$, and $f$ is $\pi_1$-injective, by \Cref{deg1tohomeoremark2}, after a proper homotopy, we may further assume that each component of $f^{-1}(\mathscr A)$, if any, is a primitive circle on $\mathbb  C^*$, and $f$ maps every component of $f^{-1}(\mathscr A)$ onto a component of $\mathscr A$ by a covering map. Moreover, any two primitive circles on $\mathbb  C^*$ co-bound an annulus in $\mathbb  C^*$, but no two distinct components of $\mathscr A$ co-bound an annulus in $\Sigma$ together imply either $f^{-1}(\mathscr A)=\varnothing$ or there exists a component $\mathcal C$ of $\mathscr A$ such that $f^{-1}(\mathscr A)=f^{-1}(\mathcal C)\neq\varnothing$. 
    
     At first, suppose $f^{-1}(\mathscr A)=\varnothing$. Then $f(\mathbb  C^*)$, being non-compact, must be contained in a punctured disk $\mathrm D_*$ that appears as a complementary component of the decomposition of $\Sigma$ by $\mathscr A$. Hence, the proof for this case is complete.

    From now onwards we will assume $f^{-1}(\mathscr A)=f^{-1}(\mathcal C)\neq\varnothing$. If $f^{-1}(\mathcal C)$ has more than one component, then we want to properly homotope $f$ so that $f^{-1}(\mathcal C)$ becomes a single circle and $f\vert f^{-1}(\mathcal C)\to \mathcal C$ is a covering map. For future use, note that if $\mathrm S'$ is a compact bordered subsurface of $\mathbb  C^*$, then a homotopy of $f\vert \mathrm S'$ relative to  $\partial \mathrm S'$ can be extended to a proper homotopy of $f$.

    Notice that any two distinct components of $f^{-1}(\mathcal C)$, being primitive circles on $\mathbb  C^*$, co-bound an annulus in $\mathbb  C^*$. So, let $\mathcal A'$ be an annulus co-bounded by two components of $f^{-1}(\mathcal C)$. The continuity of $f$ tells that in the decomposition of $\Sigma$ by $\mathscr{A}$, $f(\mathcal{A'})$ lies within one of the two complementary components that share $\mathcal C$ as a common boundary. Thus, we can always find an essential compact bordered subsurface $\mathrm S$ of $\Sigma$ such that $\mathrm S$ is contained in one of the complementary components of $\Sigma$, $f(\mathcal A')\subseteq \mathrm S$, and $f(\partial \mathcal A')=\mathcal C\subseteq \partial\mathrm S$. Recall that \(f\) restricts to a covering map from each component of \(f^{-1}(\mathcal{C})\) onto \(\mathcal{C}\). Since $f\vert \mathcal A'\to \mathrm S$ is $\pi_1$-injective and $f\vert \partial \mathcal A'\to \mathcal C$ is a local homeomorphism, after a homotopy of $f\vert \mathcal A'\to \mathrm S$  relative to  $\partial \mathcal A'$, we may assume $f(\mathcal A')\subseteq \mathcal C$ \cite[Lemma 1.4.3]{MR224099}. Let $\mathcal A'_\text{out}$ denote the union of all annuli co-bounded by two components of $f^{-1}(\mathcal C)$. Then, after a homotopy of $f\vert \mathcal A'_\text{out}\to \Sigma$ relative to  $\partial \mathcal A_\text{out}'$, we may assume $f\left(\mathcal A_\text{out}'\right)\subseteq \mathcal C$. Denote the boundary components of $\mathcal A'_\text{out}$ by $\mathcal C'$ and $\mathcal C''$. Now, \Cref{deg1tohomeoremark2} tells that there exists a one-sided tubular neighborhood $\mathcal C'\times [1,2]\equiv\mathcal V'$ of $\mathcal C'\equiv\mathcal C'\times \{2\}$ and a one-sided tubular neighborhood $\mathcal C\times [1,2]\equiv\mathcal V$ of $\mathcal C\equiv\mathcal C\times\{2\}$ such that $f\left(z,r\right)=(f(z),r)$ for all $(z,r)\in \mathcal C'\times [1,2]$. Applying \Cref{pushingleft}, after a homotopy of $f\vert \mathcal A_\text{out}'\cup \mathcal V'$ relative to  $\mathcal C''\sqcup \left(\mathcal C'\times \{1\}\right)$, we can assume that $f^{-1}(\mathcal C)=\mathcal C''$ and $f\vert \mathcal C''\to \mathcal C$ is an $n$-sheeted covering map.

    At this point, two cases arise depending on whether $\Sigma$ is equal to $\mathbb  C^*$ or not. Let's begin by assuming $\Sigma\neq \mathbb  C^*$. Thus, if $\mathrm S_1$ and $\mathrm S_2$ are two distinct, complementary components of the decomposition of $\Sigma$ by $\mathscr A$ such that $\mathrm S_1$ is a punctured disk and $\partial\mathrm S_1\cap \partial \mathrm S_2\neq \varnothing$, then $\mathrm S_2$ must be compact. Since $f$ is proper, $f(\mathbb  C^*)$ is non-compact. Now, upon considering the decomposition of $\Sigma$ by $\mathscr A$, we can tell that $f\left(\mathbb  C^*\right)$ is contained in one of the two complementary components that share $\mathcal C$ as a common boundary. Hence, there exists a punctured disk $\mathrm D_*$ that appears as a complementary component of the decomposition of $\Sigma$ by $\mathscr A$ such that $\partial \mathrm D_*=\mathcal C$ and $f\left(\mathbb  C^*\right)\subseteq\mathrm D_*$.

    Next, assume $\Sigma=\mathbb  C^*$, and let $\mathrm D_*^{(1)}$ and $\mathrm D_*^{(2)}$ be the punctured disks that appear as complementary components of the decomposition of $\mathbb  C^*$ by $\mathcal C$. Note that in this case, $f(\mathbb  C^*)$ cannot be equal to $\mathbb  C^*$; otherwise, applying \Cref{Alexandertrick2} on each $f\vert f^{-1}(\mathrm D_*^{(1)})\to \mathrm D_*^{(1)}$ and $f\vert f^{-1}(\mathrm D_*^{(2)})\to \mathrm D_*^{(2)}$, would yield $\deg(f)=\pm n\neq 0$. So, $f\left(\mathbb  C^*\right)$ must contained in one of $\mathrm D_*^{(1)}$ or $\mathrm D_*^{(2)}$. So, we are done. \end{proof}
Notice that, except in the last paragraph of the previous proof, we have not utilized the fact that $\deg(f)=0$. This is because of the following proposition.
\begin{proposition}
    \textup{Let $f$ be a $\pi_1$-injective proper map from $\mathbb  C^*$ to a non-compact oriented surface $\Sigma$ such that $\deg(f)\neq 0$. Then $\Sigma=\mathbb  C^*$.}\label{infiniteindex}
\end{proposition}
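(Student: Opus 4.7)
The plan is to extract strong algebraic constraints on $\pi_1(\Sigma)$ from the non-vanishing degree hypothesis, and then invoke the low-complexity classification of non-compact surfaces to pin down $\Sigma$ as $\Bbb C^*$.

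First, since $f$ is $\pi_1$-injective and $\pi_1(\Bbb C^*)\cong \Bbb Z$, the subgroup $\text{im}\,\pi_1(f)$ of $\pi_1(\Sigma)$ is infinite cyclic. Because $\deg(f)\neq 0$, I would apply \Cref{degreeonemapsarepi1surjective} to conclude that $[\pi_1(\Sigma):\text{im}\,\pi_1(f)]$ divides $\deg(f)$ and in particular is finite. Hence $\pi_1(\Sigma)$ contains $\Bbb Z$ as a finite-index subgroup.

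Next I would exploit that any non-compact surface $\Sigma$ deformation retracts onto a (possibly infinite) $1$-complex, so that $\pi_1(\Sigma)$ is a free group; this is standard from the existence of an efficient exhaustion by compact bordered subsurfaces provided by Goldman's procedure. Schreier's lemma then forces $\pi_1(\Sigma)$ to be finitely generated, since it contains a finitely generated subgroup of finite index. Thus $\pi_1(\Sigma)\cong F_n$ for some $n\geq 1$, and the Nielsen--Schreier index formula says that a subgroup of index $d$ in $F_n$ is free of rank $d(n-1)+1$. Requiring this rank to be $1$ forces $n=1$, whence $\pi_1(\Sigma)\cong\Bbb Z$.

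Finally, as already recalled in the proof of \Cref{I'} through \cite[Theorem 3.1.9]{sumanta}, a non-compact surface with infinite cyclic fundamental group is homeomorphic to $\Bbb C^*$, which is the desired conclusion. I do not anticipate a serious obstacle here: the statement is essentially a clean algebraic consequence of \Cref{degreeonemapsarepi1surjective}, the freeness of $\pi_1$ of a non-compact surface, and the low-complexity classification; if anything deserves care, it is confirming that the free-rank computation rules out the trivial group case, but this is immediate since $\pi_1(\Sigma)$ must contain a copy of $\Bbb Z$.
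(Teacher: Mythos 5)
Your proof is correct and proceeds by essentially the same argument as the paper: invoke \Cref{degreeonemapsarepi1surjective} to get finite index, use freeness of $\pi_1$ of a non-compact surface together with the Nielsen--Schreier rank formula, and apply the classification of non-compact surfaces with cyclic fundamental group. The paper phrases it as a contradiction (assume $\Sigma\neq\Bbb C^*$, so $\pi_1(\Sigma)$ has rank $\geq 2$, so the finite-index subgroup has rank $\geq 2$, contradicting $\mathrm{im}\,\pi_1(f)\cong\Bbb Z$), whereas you argue directly that $\pi_1(\Sigma)\cong\Bbb Z$ via Schreier's lemma for finite generation and then the index formula; these are the same ideas in a different order.
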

\begin{proof}
    Since $\pi_1(f)$ is injective, $\Sigma\neq \mathbb  R^2$. Now, note that the fundamental group of a non-compact surface other than the plane and the punctured plane is a free group of rank at least two. By \Cref{degreeonemapsarepi1surjective}, the index $\left[\pi_1(\Sigma):\operatorname{im} \pi_1(f)\right]$ must be finite. Thus, $\pi_1(\Sigma)$ is finitely generated. So, if $\Sigma\neq \mathbb  C^*$, then by the Nielsen-Schreier index formula, $\operatorname{im}\pi_1(f)$ is a free group of rank at least $2$, a contradiction. 
\end{proof}
Now, we are ready to prove the first part of \Cref{puncturedplane}. 

\begin{proof}[Proof of part~\ref{puncturedplane1} of \Cref{puncturedplane}]
  Notice that $\Sigma\neq \mathbb  R^2$ because $f$ is $\pi_1$-injective. By applying \Cref{puncturedplane1'lemma}, we may assume that $f(\mathbb  S^1\times \mathbb  R)\subseteq \mathrm D_*$ for some properly embedded essential punctured disk $\mathrm D_*\subset \Sigma$. Let $e$ be the (isolated planar) end of $\Sigma$ determined by $\mathrm  D_*$. Consider a locally-finite, pairwise-disjoint collection $\mathscr A\coloneqq\{\mathcal C_i:i=1,2,..\}$  of smoothly embedded primitive circles on $\Sigma$ such that each $\mathcal C_i$ is contained in $\operatorname{int}(\mathrm D_*)$. Observe  that if a proper map $g$ is properly homotopic to $f$, then $g^{-1}(\mathscr A)\neq \varnothing$ because $\operatorname{Ends}(g)=\operatorname{Ends}(f)$ sends both elements of $\operatorname{Ends}(\mathbb  S^1\times \mathbb  R)$ to $e$. By \Cref{whiteny}, we may assume $f$ is smooth as well as transverse to $\mathscr A$. Since each component of $\mathscr A$ is a primitive circle on $\Sigma$, and $f$ is $\pi_1$-injective, by \Cref{deg1tohomeoremark2}, after a proper homotopy, we may further assume that each component of the non-empty \text{LFCS} $f^{-1}(\mathscr A)$ is a primitive circle on $\mathbb  S^1\times \mathbb  R$ and $f$ maps for every component of $f^{-1}(\mathscr A)$ onto a component of $\mathscr A$ by a covering map. Thus, there exists a positive integer $n_0$ such that $f^{-1}(\mathcal C_{n_0})\neq \varnothing$ and $f$ sends every component of $f^{-1}(\mathcal C_{n_0})$ onto $\mathcal C_{n_0}$ by a covering map. Now, by an argument similar to what is given in the proof of \Cref{puncturedplane1'lemma}, after a proper homotopy, we may assume that $\mathcal C_{n_0}'\coloneqq f^{-1}(\mathcal C_{n_0})$ is a single circle and $f\vert \mathcal C_{n_0}'\to \mathcal C_{n_0}$ is a finite-sheeted covering. Let $\mathcal D_*$ be the properly embedded essential punctured disk contained in $\mathrm D_*$ such that $\partial \mathcal D_*=\mathcal C_{n_0}$. Notice that $f(\mathbb  S^1\times \mathbb  R)=\mathcal D_*$. 

  Now, choose $r\neq 0$ so that $\mathbb  S^1\times \{r\}$ doesn't intersect $\mathcal C_{n_0}'$. Since $\mathbb  S^1\times \{r\}$ co-bounds an annulus with each of the circles $\mathbb  S^1\times \{0\}$ and $\mathcal C_{n_0}'$, the isotopy extension theorem \cite[Proposition 1.11]{MR2850125} \cite[Theorem 1.3.]{MR1336822} gives two homotopies $\mathcal H_1, \mathcal H_2\colon (\mathbb  S^1\times \mathbb  R)\times [0,1]\to \mathbb  S^1\times \mathbb  R$ through homeomorphisms such that $\mathcal H_1(-,0)=\operatorname{Id}_{\mathbb S^1\times \mathbb R}=\mathcal H_2(-,0)$, $\mathcal H_1(\mathcal C_{n_0}',1)=\mathbb  S^1\times \{r\}$, and $\mathcal H_2(\mathbb  S^1\times \{r\},1)=\mathbb  S^1\times \{0\}$. By \cite[Theorem 1.3]{MR181989}, $\mathcal H_1$ and $\mathcal H_2$ are proper homotopies. Thus, there exists a homeomorphism of $\mathbb  S^1\times \mathbb  R$ properly homotopic to the identity, which sends $\mathcal C_{n_0}'$ onto $\mathbb  S^1\times \{0\}$. Therefore, after a proper homotopy, we may assume that $f(\mathbb  S^1\times \mathbb  R)=\mathcal D_*$ and $f\vert \mathbb  S^1\times \{0\}\to \partial \mathcal D_*=\mathcal C_{n_0}$ is a finite-sheeted covering. Now, by the classification of finite-sheeted covering maps of the circle and together with an application of \Cref{Alexandertrick2} on each of $f\vert \mathbb  S^1\times (-\infty,0]\to \mathcal D_*$ and $f\vert \mathbb  S^1\times [0,\infty)\to \mathcal D_*$, we can conclude that there exists a $\pi_1$-injective, proper embedding $\iota\colon \mathbb  S^1\times [0,\infty)\hookrightarrow \Sigma$ with $\operatorname{im}(\iota)=\mathcal D_*$ such that after a proper homotopy, $f$ can be described by the proper map $\mathbb  S^1\times \mathbb  R\ni (z,t)\longmapsto \iota\left(z^d,|t|\right)\in \Sigma$ for some non-zero integer $d$. Moreover, by an argument similar to that given in the first part of the proof of part~\ref{plane1} of \Cref{plane}, we can conclude that given any compact subset $K$ of $\Sigma$, there exists a proper map $g$ properly homotopic to $f$ such that $\operatorname{im}(g)\subseteq \Sigma\setminus K$. 
\end{proof}

\begin{corollary}
    \textup{Let $f_1$ and $f_2$ be two \( \pi_1 \)-injective proper maps of degree zero from $\mathbb S^1\times \mathbb R$ to a non-compact oriented surface $\Sigma$. Then \( f_1 \) is properly homotopic to \( f_2 \) if and only if \( \operatorname{Ends}(f_1) = \operatorname{Ends}(f_2) \) and \( \widehat{\pi}(f_1) = \widehat{\pi}(f_2)\colon \widehat{\pi}(\mathbb{S}^1 \times \mathbb{R}) \to \widehat{\pi}(\Sigma) \), where \( \widehat{\pi} \) denotes the set of free homotopy classes of oriented loops.}
\end{corollary}

An argument similar to the one given in the proof of part~\ref{puncturedplane1} of \Cref{puncturedplane} provides the following.
\begin{proposition}
    \textup{Let $f\colon \mathbb  S^1\times \mathbb  R\to \mathbb  S^1\times \mathbb  R$ be a $\pi_1$-injective proper map of degree $0$. Then $f$ is properly homotopic to $\mathbb  S^1\times \mathbb  R\ni(z,t)\longmapsto \left(z^d,|t|\right)\in \mathbb  S^1\times \mathbb  R$ for some integer $d\neq 0$. }
\end{proposition}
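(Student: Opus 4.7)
I would prove this proposition by running the argument of \Cref{puncturedplane1'} essentially verbatim with $\Sigma=\mathbb{S}^1\times\mathbb{R}$, and then exploiting the homogeneity of $\mathbb{S}^1\times\mathbb{R}$ to normalize the resulting embedding of the half-annulus into a standard model. Applying \Cref{puncturedplane1'} directly, there is a $\pi_1$-injective proper embedding $\iota\colon\mathbb{S}^1\times[0,\infty)\hookrightarrow\mathbb{S}^1\times\mathbb{R}$ and a non-zero integer $d$ such that $f$ is properly homotopic to the proper map $(z,t)\mapsto\iota(z^d,|t|)$. What remains is to properly homotope this map so that $\iota$ becomes the obvious inclusion (or its composition with complex conjugation).

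The image $\iota(\mathbb{S}^1\times[0,\infty))$ is an essential punctured disk whose boundary $\iota(\mathbb{S}^1\times 0)$ is a primitive circle on $\mathbb{S}^1\times\mathbb{R}$. Any primitive circle on $\mathbb{S}^1\times\mathbb{R}$ is ambiently isotopic to $\mathbb{S}^1\times 0$; combining the isotopy-extension theorem with the observation from \Cref{propercategory} that a homotopy through homeomorphisms of a manifold is automatically a proper homotopy, there is a self-homeomorphism $\Phi$ of $\mathbb{S}^1\times\mathbb{R}$, properly homotopic to the identity, with $\Phi\bigl(\iota(\mathbb{S}^1\times 0)\bigr)=\mathbb{S}^1\times 0$. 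Substituting $\Phi\circ\iota$ for $\iota$ induces a proper homotopy of the folded map, so we may assume $\iota(\mathbb{S}^1\times[0,\infty))$ equals one of the two closed half-annuli $\mathbb{S}^1\times[0,\infty)$ or $\mathbb{S}^1\times(-\infty,0]$, depending on which end of the target absorbs $\mathrm{Ends}(f)$. In the second case, replacing $|t|$ by $-|t|$ in the target formula (the version of the proposition appropriate for that end) reduces us to the first, so we may assume $\iota$ is a self-homeomorphism of $\mathbb{S}^1\times[0,\infty)$.

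Now the proper mapping class group of the half-annulus $\mathbb{S}^1\times[0,\infty)$ is $\mathbb{Z}/2$: an orientation-preserving self-homeomorphism is properly isotopic to the identity (first normalize its restriction to the boundary circle to the identity via an isotopy of $\mathbb{S}^1$, then use a straight-line deformation in the radial coordinate), and an orientation-reversing one is properly isotopic to the involution $(w,s)\mapsto(\bar w,s)$. Extending the resulting isotopy of $\iota$ to a proper ambient isotopy of $\mathbb{S}^1\times\mathbb{R}$ via isotopy extension, we obtain a proper homotopy from $(z,t)\mapsto\iota(z^d,|t|)$ to either $(z,t)\mapsto(z^d,|t|)$ or $(z,t)\mapsto(\bar z^d,|t|)=(z^{-d},|t|)$, both of which are of the required form (replacing $d$ by $-d$ if necessary). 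The main obstacle is precisely this last technical point: verifying that the proper isotopy of the \emph{restricted} embedding $\iota$ can be promoted to an ambient proper isotopy of $\mathbb{S}^1\times\mathbb{R}$ in such a way that post-composition with the standard folding $(z,t)\mapsto(z^d,|t|)$ remains proper at every intermediate time.
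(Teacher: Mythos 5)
Your strategy --- invoking \Cref{puncturedplane1'} as a black box and then normalizing the resulting embedding $\iota$ --- coincides with what the paper intends by ``an argument similar to the proof of \Cref{puncturedplane1'}'', so the overall route is the same. Three remarks. First, the sign issue you flag is a genuine feature of the statement as written, not merely cosmetic: since $\text{Ends}$ is a proper-homotopy invariant, a map whose (properly homotoped) image lies in $\mathbb{S}^1\times(-\infty,0]$ cannot be properly homotopic to $(z,t)\mapsto(z^d,|t|)$, whose image lies in the other half-annulus; one must allow the reflected model $(z,t)\mapsto(z^d,-|t|)$, so your phrase ``the version of the proposition appropriate for that end'' is actually pointing at a needed caveat in the proposition itself. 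Second, your claim that the proper mapping class group of $\mathbb{S}^1\times[0,\infty)$ is $\mathbb{Z}/2$ is correct, but ``a straight-line deformation in the radial coordinate'' does not dispose of the residual Dehn twist that remains after normalizing $\iota\vert\mathbb{S}^1\times 0$: you need to observe that a Dehn twist supported in a compact sub-annulus of $\mathbb{S}^1\times[0,\infty)$ is properly isotopic to the identity by sliding its support off to the end, or, closer to the paper's mechanism, apply \Cref{Alexandertrick2} on each half $\mathbb{S}^1\times(-\infty,0]$ and $\mathbb{S}^1\times[0,\infty)$ exactly as the proof of \Cref{puncturedplane1'} does. Third, the obstacle you single out at the end --- promoting the proper isotopy of $\iota$ to an ambient proper isotopy of $\mathbb{S}^1\times\mathbb{R}$ --- is not actually needed: the folding $\mathbb{S}^1\times\mathbb{R}\ni(z,t)\mapsto(z^d,|t|)\in\mathbb{S}^1\times[0,\infty)$ is already a proper map, and composing it with the proper isotopy of $\iota$ on the half-annulus (followed by the inclusion into $\mathbb{S}^1\times\mathbb{R}$) yields a proper homotopy directly, because compositions of proper maps are proper. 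No ambient extension is required.
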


We now prove the second part of \Cref{puncturedplane}, using the following lemma.

 \begin{lemma}
     \textup{Let $\varphi\colon \mathbb  C^*\to \mathbb  C^*$ be a proper map of non-zero degree. If $\varphi$ is a homotopy equivalence, then it is properly homotopic to a homeomorphism.}\label{strpuncturedpalne}
 \end{lemma}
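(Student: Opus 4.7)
The plan is to adapt the surgery techniques used in the proofs of \Cref{plane2'} and \Cref{puncturedplane1'lemma} to the $\Bbb C^*$-to-$\Bbb C^*$ setting. Take $\mathcal C\coloneqq\Bbb S^1\subset\Bbb C^*$; it is a primitive circle, and $\{\mathcal C\}$ is trivially an \textup{LFCS}. By \Cref{whiteny} I may assume $\varphi$ is smooth and transverse to $\mathcal C$, so that $\varphi^{-1}(\mathcal C)$ is a finite pairwise-disjoint collection of smoothly embedded circles in $\Bbb C^*$. Since $\varphi$ is a homotopy equivalence it is in particular $\pi_1$-injective, so by \Cref{deg1tohomeoremark2} I may further arrange that each component of $\varphi^{-1}(\mathcal C)$ is a primitive circle on $\Bbb C^*$ mapped onto $\mathcal C$ by a covering.

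The crux is then to reduce $\varphi^{-1}(\mathcal C)$ to a single circle. Any two distinct primitive circles on $\Bbb C^*$ cobound an annulus, so I would replay the argument from the proof of \Cref{puncturedplane1'lemma}: on each annulus cobounded by two components of $\varphi^{-1}(\mathcal C)$, homotope $\varphi$ rel.\ boundary so that its image lies on $\mathcal C$; then, using the normalized one-sided tubular-neighborhood description furnished by \Cref{deg1tohomeoremark2} together with \Cref{pushingleft} and \Cref{particularannuluscompression}, eliminate all but one circle. After this proper homotopy, $\varphi^{-1}(\mathcal C)=\mathcal C'$ is a single primitive circle and $\varphi\vert\mathcal C'\to\mathcal C$ is a finite-sheeted covering map. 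Because $\varphi$ is a homotopy equivalence, $\pi_1(\varphi)$ carries the generator $[\mathcal C']$ of $\pi_1(\Bbb C^*)=\Bbb Z$ to $\pm[\mathcal C]$, which forces this covering to have degree $\pm 1$; hence $\varphi\vert\mathcal C'\to\mathcal C$ is already a homeomorphism.

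Now write $\Bbb C^*\setminus\mathcal C'=U_0'\sqcup U_\infty'$ and $\Bbb C^*\setminus\mathcal C=U_0\sqcup U_\infty$, with each component an open punctured disk. Since $\deg(\varphi)\neq 0$, $\varphi$ is surjective by \Cref{non-surjectivepropermaphasdegreezero}; together with $\varphi^{-1}(\mathcal C)=\mathcal C'$ and connectedness, this forces $\varphi$ to induce a bijection between $\{U_0',U_\infty'\}$ and $\{U_0,U_\infty\}$. In either case, each restriction of $\varphi$ to the closure of a $U_i'$ is a proper map between closed punctured disks whose boundary restriction is a homeomorphism. After identifying both closed punctured disks with $\Bbb B^2_*$, \Cref{Alexandertrick2} replaces this restriction with its radial extension — a homeomorphism on that piece — through a proper homotopy rel.\ $\mathcal C'$. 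Pasting the homotopies on both sides produces a homeomorphism $\Bbb C^*\to\Bbb C^*$ properly homotopic to $\varphi$.

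The main obstacle is the annulus-removal step, namely shrinking $\varphi^{-1}(\mathcal C)$ to a single circle while preserving the normalized covering behavior on the surviving circle, so that the Alexander-trick step can be applied unambiguously on each side of $\mathcal C'$. This is exactly the bookkeeping that drives the proofs of \Cref{plane2'} and \Cref{puncturedplane1'lemma}, and it is handled by the one-sided tubular-neighborhood description coupled with \Cref{pushingleft} and \Cref{particularannuluscompression}. Once that reduction is in place, the degree-$\pm 1$ identification of $\varphi\vert\mathcal C'$ and the two applications of \Cref{Alexandertrick2} finish the proof essentially formally.
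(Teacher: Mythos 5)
Your proposal is correct and follows essentially the same route as the paper: make $\varphi$ transverse to a primitive circle $\mathcal C$, reduce $\varphi^{-1}(\mathcal C)$ to a single primitive circle mapped homeomorphically onto $\mathcal C$ using the surgery techniques of \Cref{plane2'} and \Cref{puncturedplane1'lemma}, and finish by applying \Cref{Alexandertrick2} to the two complementary punctured disks. The paper's own proof is just a condensed version of this, citing the reduction from \Cref{plane2'} and noting that the degree-$\pm1$ conclusion on $\mathcal C'$ is "possible as $\varphi$ is a homotopy equivalence"; your explicit $\pi_1$ argument for that step is the right justification.
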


 \begin{proof}
     Pick a smoothly embedded primitive circle $\mathcal C$ on $\mathbb  C^*$. Thus, there are punctured disks $\mathrm D_*^{(1)}$ and $\mathrm D_*^{(2)}$ such that $\mathbb  C^*=\mathrm D_*^{(1)}\cup \mathrm D_*^{(2)}$ and $\mathcal C=\mathrm D_*^{(1)}\cap \mathrm D_*^{(2)}$. By \Cref{whiteny}, after a proper homotopy, we may assume $\varphi$ is smooth as well as transverse to $\mathcal C$. Now, \Cref{non-surjectivepropermaphasdegreezero} tells that $\varphi$ remains surjective even after a proper homotopy.  Therefore, $\varphi^{-1}(\mathcal C)$ is a non-empty, pairwise-disjoint, finite collection of smoothly embedded circles on $\mathbb  C^*$. Further, after a proper homotopy, similar to what we have done in the proof of part~\ref{plane2} of \Cref{plane}, we may assume $\mathcal C'\coloneqq\varphi^{-1}(\mathcal C)$ is a primitive circle on $\mathbb  C^*$ and $\varphi\vert \mathcal C'\to \mathcal C$ is a homeomorphism. This is possible as $\varphi$ is a homotopy equivalence.  Let $\mathrm D_*^{(1)'}$ and $\mathrm D_*^{(2)'}$ be the punctured disks such that $\mathbb  C^*=\mathrm D_*^{(1)'}\cup \mathrm D_*^{(2)'}$ and $\mathcal C'=\mathrm D_*^{(1)'}\cap \mathrm D_*^{(2)'}$. Since $\varphi$ is surjective, possibly after re-indexing, we may assume  $\varphi^{-1}(\mathrm D_*^{(j)})=\mathrm D_*^{(j)'}$ for $j=1,2$. Finally, to conclude apply \Cref{Alexandertrick2} on $\varphi\vert\mathrm D_*^{(j)'}\to \mathrm D_*^{(j)}$ for $j=1,2$.
 \end{proof}
 \begin{remark}
     \textup{The non-zero degree assumption in \Cref{strpuncturedpalne} can't be dropped. For example, $f\colon \mathbb  S^1\times \mathbb  R\ni (z,t)\longmapsto \left(z, |t|\right)\in \mathbb  S^1\times \mathbb  R$ is a self-homotopy equivalence of $\mathbb  C^*$ which is not properly homotopic to any homeomorphism of $\mathbb  C^*$ as $\deg(f)=0$.}
 \end{remark}
 
 So we now prove part~\ref{puncturedplane2} of \Cref{puncturedplane}.

\begin{proof}[Proof of part~\ref{puncturedplane2} of \Cref{puncturedplane}]
 By \Cref{infiniteindex}, $\Sigma=\mathbb  C^*$. Let $p\colon \widetilde \Sigma\to \mathbb  C^*$ be the covering corresponding the subgroup $\operatorname{im}\pi_1(f)$ of $\pi_1(\mathbb  C^*)$, and let $\widetilde f\colon \mathbb  C^*\to \widetilde \Sigma$ be a lift of $f$ w.r.t. $p$, i.e., $p\circ \widetilde f=f$. Thus, $\operatorname{im}\pi_1(p)=\operatorname{im}\pi_1(f)$, and hence $\pi_1(\widetilde f)$ is an isomorphism because a covering map induces injection between fundamental groups. In particular, the fundamental group of $\widetilde \Sigma$ is infinite cyclic. Hence, $\widetilde \Sigma=\mathbb  C^*$. The properness of $f$ implies the properness of $\widetilde f$ by \Cref{techlemma}. Since non-compact surfaces are $K(\pi,1)$ CW-complexes, by Whitehead theorem $\widetilde f$ is a homotopy equivalence. By \Cref{techlemma}, we may assume $p$ is a $d$-sheeted covering for some positive integer $d$ and $\widetilde \Sigma$ is orientable.  Fix an orientation of $\widetilde \Sigma$. By \Cref{degreeonemapchecking}, $\deg(p)=\pm d$. Now, $\deg(\widetilde f)\neq 0$ because $0\neq n=\deg(f)=\deg(p\widetilde f)=(\pm d)\cdot \deg(\widetilde f)$. Thus, $\widetilde f\colon \mathbb  C^*\to \mathbb  C^*$ is a homotopy equivalence and $\deg(\widetilde f)\neq 0$. So $\widetilde f$ is properly homotopic to a homeomorphism by \Cref{strpuncturedpalne}, i.e., $\deg(\widetilde f)=\pm 1$. Hence, $n=\pm d$, and $f$ is properly homotopic to a $d$-sheeted covering.

 At first, suppose $n>0$. By the previous paragraph, without loss of generality, we may assume $f$ is an $n$-sheeted covering map. Now, covering space theory \cite[Proposition 1.37]{MR1867354} gives a self-homeomorphism $h$ of $\mathbb  C^*$ such that $f(z)=h(z^n)$ for all $z\in \mathbb  C^*$. Certainly, $h$ is orientation-preserving. By \Cref{homogeneity} below, we may assume $f(1)=1=h(1)$. Now, \cite[Theorem 5.7]{MR214087} tells that there exists  a level-preserving homeomorphism $\mathcal H\colon \mathbb  C^*\times [0,1]\to\mathbb  C^*\times [0,1]$ which agrees with $h$ on $\mathbb  C^*\times 0$ and with $\operatorname{Id}_{\mathbb  C^*}$ on $\mathbb  C^*\times1$. The projection $\mathbb  C^*\times [0,1]\to \mathbb  C^*$ is proper implies $h$ is properly homotopic to $\operatorname{Id}_{\mathbb  C^*}$. So we are done when $\deg(f)=n>0$. 

    Now, assume $\deg(f)=n<0$. Since the complex conjugation is an orientation-reversing self-homeomorphism of $\mathbb  C$, the map $\overline f\colon \mathbb  C^*\ni z\longmapsto \overline{f(z)}\in \mathbb  C^*$ is orientation-preserving of degree $-n$. Thus, $\overline f$ is properly homotopic to $\mathbb  C^*\ni z\longmapsto z^{-n}\in \mathbb  C^*$ by the previous case. Therefore, $f$ is properly homotopic to $\mathbb  C^*\ni z\longmapsto \overline z^{-n}\in \mathbb  C^*$.
 \end{proof}

It is known that every connected, boundaryless manifold $M$ is homogeneous, i.e., for any two points $x$ and $y$ of $M$, there exists a homeomorphism $h$ of $M$ sending $x$ to $y$. Moreover, \Cref{disk} tells us that the homeomorphism $h$ can be chosen so that it becomes properly homotopic to the identity of $M$.

\begin{proposition}
    \textup{Let $M$ be an orientable, connected topological manifold without boundary. If $x$ and $y$ are two points of $M$, then there exists a self-homeomorphism $\varphi\colon M\to M$ properly homotopic to the identity of $M$ such that $\varphi(x)=y$.}\label{homogeneity}
\end{proposition}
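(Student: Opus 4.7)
The plan is to promote the classical homogeneity-via-Theorem-\ref{disk} argument to work in the proper category, by showing that the binary relation
$x \sim y \iff \exists\, \varphi \in \mathrm{Homeo}(M) \text{ properly homotopic to } \mathrm{id}_M \text{ with } \varphi(x)=y$
is an equivalence relation with open equivalence classes. Since $M$ is connected, there can only be one class, and we are done.

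The verification that $\sim$ is an equivalence relation is the bookkeeping step. Reflexivity is trivial. For symmetry, if $\mathcal{H}\colon M\times[0,1]\to M$ is a proper homotopy through homeomorphisms from $\mathrm{id}_M$ to $\varphi$, then $(x,t)\longmapsto \mathcal{H}\!\left(\varphi^{-1}(x),\,1-t\right)$ is a homotopy through homeomorphisms from $\varphi^{-1}$ to $\mathrm{id}_M$; by the remark in \Cref{propercategory}, any homotopy through homeomorphisms of a locally compact Hausdorff space is automatically proper. Transitivity follows by concatenation and reparametrisation, using the same automatic-properness principle.

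The heart of the argument is the openness of classes, and this is precisely where \Cref{disk} and the orientability of $M$ enter. Given $x\in M$, choose an orientation-preserving chart $\psi\colon \Bbb R^n\hookrightarrow M$ with $\psi(0)=x$, where $n=\dim M$. For any $y$ in the open neighbourhood $U:=\psi\big(\{z\in\Bbb R^n:|z|<1/2\}\big)$ of $x$, write $y=\psi(y_0)$ with $|y_0|<1/2$. Pick $\varepsilon>0$ so small that both maps
$\varphi_0,\ \varphi_1\colon \Bbb B^n\hookrightarrow M,\quad \varphi_0(z):=\psi(\varepsilon z),\quad \varphi_1(z):=\psi(y_0+\varepsilon z),$
are well-defined, locally flat, and orientation-preserving embeddings of $\Bbb B^n$ into $M$; note $\varphi_0(0)=x$ and $\varphi_1(0)=y$. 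Applying \Cref{disk} to $\varphi_0,\varphi_1$ produces a proper homotopy through homeomorphisms $\mathcal{H}\colon M\times[0,1]\to M$ with $\mathcal{H}(-,0)=\mathrm{id}_M$ and $\mathcal{H}(-,1)\circ\varphi_0=\varphi_1$; in particular $\mathcal{H}(x,1)=\varphi_1(0)=y$, so $x\sim y$ via $\varphi:=\mathcal{H}(-,1)$. Hence $U$ lies in the equivalence class of $x$, proving openness.

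The only conceptual obstacle is making sure the chart-level embeddings used in \Cref{disk} are orientation-preserving — this is exactly the place orientability of $M$ is invoked. Everything else is soft: connectedness of $M$ finishes the argument, since a connected space partitioned into non-empty open subsets has only one block.
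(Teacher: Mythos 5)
Your argument is correct, but it introduces machinery that is not needed: \Cref{disk} is already a global statement, not a local one. Given arbitrary $x,y\in M$, since $M$ is oriented and connected one may choose orientation-preserving locally flat embeddings $\varphi,\psi\colon\Bbb B^n\hookrightarrow M$ with $\varphi(0)=x$ and $\psi(0)=y$ (there is no constraint forcing the images of $\varphi$ and $\psi$ to be nested or even to meet). Applying \Cref{disk} to this pair directly produces a homotopy through homeomorphisms $\mathcal H$ with $\mathcal H(-,0)=\mathrm{id}_M$ and $\mathcal H(-,1)\circ\varphi=\psi$, hence $\mathcal H(-,1)(x)=y$; by the remark in \Cref{propercategory}, $\mathcal H$ is automatically proper. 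This is a one-step proof, and it is the reading the paper intends when it says ``\Cref{disk} tells us that the homeomorphism $h$ can be chosen so that it becomes properly homotopic to the identity.'' Your equivalence-relation-with-open-classes argument is a valid alternative and would be the right strategy if one only had a \emph{local} version of the disk theorem (e.g.\ one restricted to pairs of embeddings landing in a common chart), but here it amounts to re-deriving connectedness-implies-globality when the cited theorem already hands you the global conclusion. The bookkeeping is also correct: your symmetry and transitivity verifications, and your use of the automatic-properness of homotopies through homeomorphisms on a locally compact Hausdorff space, all check out.
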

\section*{Acknowledgments}
This article is largely based on the author’s Ph.D. thesis completed at the Indian Institute of Science, Bangalore, under the supervision of Prof. Siddhartha Gadgil, whose guidance was essential throughout the work. The author gratefully acknowledges the anonymous referees for their thorough reading of the manuscript and for their constructive comments and suggestions, including the recommendation to incorporate \Cref{ref}. The author also thanks Prof. Claude L. Schochet for his helpful correspondence concerning the literature on the cardinality of the first derived limit.

\bibliographystyle{plain}
\bibliography{bibliography.bib}

\end{document}